\documentclass{amsart}
\usepackage{amssymb}
\usepackage{stmaryrd}
\usepackage{amsfonts}
\usepackage{amsmath}
\usepackage{url}
\usepackage{color}

\linespread{1}

\newtheorem{theorem}{Theorem}

\newtheorem{proposition}[theorem]{Proposition}
\newtheorem{corollary}[theorem]{Corollary}
\newtheorem{lemma}[theorem]{Lemma}
\newtheorem{claim}[theorem]{Claim}
\newtheorem{definition}[theorem]{Definition}
\newtheorem{assumption}[theorem]{Assumption}
\newtheorem{observation}[theorem]{Observation}

\theoremstyle{remark}
\newtheorem{remark}[theorem]{Remark}


\newcommand{\nequiv}{\equiv \hspace{-9pt} \slash \hspace{4pt}}

\title{Delaunay triangulations of lens spaces}
\date{January 2009.
\\
AMS subject classification: 52B11, 57M50.
\\
Keywords: lens space, convex hull, continued fraction, Farey, Delaunay triangulation. 
}

\author{Fran\c{c}ois Gu\'eritaud}

\begin{document}

\begin{abstract}
We compute the convex hull $\Pi$ of an arbitrary finite subgroup $\Gamma$ of ${\mathbb{C}^*}^2$ --- or equivalently, of a generic orbit of the action of $\Gamma$ on $\mathbb{C}^2$. The basic case is $\Gamma=\{(e^{2ik\pi/q},e^{2ikp\pi/q})~|~0\leq k<q\}$ where $p\in\llbracket 2,q-2\rrbracket$ is coprime to $q$: then, $\Pi$ projects to a canonical or ``Delaunay'' triangulation $\mathcal{D}$ of the lens space $L_{p/q}=\mathbb{S}^3/\Gamma$ (endowed with its spherical metric), and the combinatorics of $\mathcal{D}$ are dictated by the continued fraction expansion of $p/q$.
\end{abstract}

\maketitle
\section{Introduction}

Given a compact pointed Riemannian $3$--manifold $(M,x_0)$, a natural object to construct is the Voronoi domain of $x_0$, i.e.~the set $X$ of all points $x$ such that the shortest path from $x$ to $x_0$ is unique. This domain $X$ can be embedded as a contractible subset of the universal cover $\widetilde{M}$ of $M$; if $M$ is homogeneous, then $X$ is typically (though not always) the interior of a polyhedron whose faces are glued in pairs to yield $M$. If so, dual to $X$ (and this gluing data) is the so-called Delaunay decomposition $\mathcal{D}$ of $M$, which comprises one cell per vertex of $X$, and has only one vertex, namely $x_0$. If $\widetilde{M}$ is $\mathbb{S}^3$ or $\mathbb{R}^3$ or $\mathbb{H}^3$, it is a classical result that $\mathcal{D}$ is itself realized by geodesic polyhedra which tile $M$.

A strong motivation for studying the Delaunay decomposition is that it is a combinatorial invariant of $(M,x_0)$ that encodes all the topology of $M$; this also suggests that computing $\mathcal{D}$ is hard in general. Jeff Weeks' program SnapPea \cite{snappea} achieves this numerically in the cusped hyperbolic case (taking $x_0$ in the cusp); for explicit theoretical predictions of $\mathcal{D}$ in special cases, see for example \cite{these, aswy, 
lackenby, qf, ananas}.

This paper is primarily concerned (Sections \ref{sec:prelim} through \ref{sec:proof}) with the case $M=\mathbb{S}^3/\varphi$, where 
$$\varphi(z,z')=\left (e^{\frac{2i\pi}{q}}z,e^{\frac{2ip\pi}{q}}z'\right )$$ 
and $\mathbb{S}^3$ is seen as the unit sphere of $\mathbb{C}^2$. Here, $\frac{p}{q}$ is a rational of $(0,1)$ in reduced form, and $M$ is called the \emph{lens space} $L_{p/q}$. We will show that the combinatorics of $\mathcal{D}$ (and $X$) are dictated by the continued fraction expansion of $\frac{p}{q}$ (and are independent of the choice of basepoint $x_0$).
 
The lift of $\mathcal{D}$ to $\mathbb{S}^3$ is the Delaunay decomposition of $\mathbb{S}^3$ with respect to a \emph{finite set} $\langle \varphi \rangle \widetilde{x}_0$ of vertices. Finally, in Section \ref{sec:general}, we extend our results to the case where $\langle \varphi \rangle$ is replaced by an arbitrary finite subgroup of $\mathbb{S}^1\times \mathbb{S}^1$ (possibly non-cyclic, acting possibly with fixed points on $\mathbb{S}^3$). 

\subsection*{History} After the first version of this paper was posted, G\"unter M. Ziegler made me aware of Smilansky's paper \cite{smilansky2} where essentially the same results were proven. The approaches are similar, except for the key result: we prove the convexity of a certain plane curve $\gamma$ by a big computation (Claim \ref{cla:key}); Smilansky in \cite{smilansky2} seems unaware that $\gamma$ is always convex, but has a clever lemma (proved in \cite{smilansky1}) to show that $\gamma$ behaves ``as though it were convex'' with respect to certain intersecting lines.

Note that Sergei Anisov has also announced similar results in \cite{anisov1, anisov2}.

\subsection*{Acknowledgements}
The main result (without its proof!) occurred to me during the workshop on Heegaard splittings at AIM, Palo Alto, in December 2007. It is a pleasure to thank the organizers of this beautiful meeting, as well as Omprakash Gnawali for early computer experiments and Saul Schleimer for subsequent discussions on the topic.

\section{Preliminaries} \label{sec:prelim}

Let $x_0$ be a point of $\mathbb{S}^3$ and $\mathcal{O}\subset \mathbb{S}^3$ its $\langle \varphi \rangle$--orbit. Suppose that the convex hull $\Pi$ of $\mathcal{O}$ has non-empty interior. It is well-known that the boundary of $\Pi$ then decomposes into affine cells, whose projections to $\mathbb{S}^3$ (from the origin) are precisely the cells of the Delaunay decomposition $\mathcal{D}$. Therefore, all we have to do is to determine the faces of the convex hull $\Pi$ of $\mathcal{O}$: these are Theorems \ref{thm:main} and \ref{thm:nofaces} below.

\subsection{What is the generic case?}

However, if $p\equiv\pm 1~[\text{mod } q]$, then any orbit $\mathcal{O}$ of $\varphi$ is a regular polygon contained in a plane of $\mathbb{R}^4\simeq\mathbb{C}^2$, which easily implies that the Voronoi domain $X$ of $L_{p/q}$ (for any basepoint) is bounded by only two spherical caps (this is a special case where $X$ is \emph{not} a proper spherical polyhedron). It is also easy to see that the isometry group of $L_{p/q}$ acts transitively on $L_{p/q}$ in that case.

Therefore, we will assume $p\notin\{1,q-1\}$. Then, the identity component of the isometry group of $L_{p/q}$ lifts to the group $G=\mathbb{S}^1\times \mathbb{S}^1$ acting diagonally on $\mathbb{C}^2$ (of course, $\varphi \in G$). The $G$-orbits in $\mathbb{S}^3$ are the tori $\{(z,z')~|~\frac{|z'|}{|z|}=\kappa \}$ for $\kappa \in \mathbb{R}_+^*$, and the circles $C=\{0\}\times \mathbb{S}^1$ and $C'=\mathbb{S}^1\times \{0\}$. If $x_0\in C\cup C'$, then the orbit $\mathcal{O}=\langle \varphi \rangle x_0$ is a plane regular polygon, so the Voronoi domain $X$ is again bounded by two spherical caps. 

Therefore, we will be concerned with the generic case $x_0\in\mathbb{S}^3\smallsetminus (C\cup C')$. Since changing $x_0$ only modifies its orbit $\mathcal{O}$ (and therefore the polyhedron $\Pi$) by a diagonal automorphism of $\mathbb{C}^2$, all basepoints $x_0 \notin C\cup C'$ are equivalent as regards the combinatorics of $\Pi$ and of the Delaunay decomposition. In fact $x_0$ does not even need to belong to the \emph{unit} sphere: for convenience, we will take $x_0=(1,0,1,0)\in\sqrt{2}\mathbb{S}^3$ in Theorem \ref{thm:main}.

\subsection{An intuitive description of the triangulation}

Clearly, $L_{p/q}$ is obtained by gluing two solid tori $\{(z,z')\in \mathbb{S}^3~|~\frac{|z|}{|z'|}\geq 1\}/\varphi$ and $\{(z,z')\in \mathbb{S}^3~|~\frac{|z|}{|z'|}\leq 1\}/\varphi$, boundary-to-boundary. Equivalently, $L_{p/q}$ is a thickened torus $(\mathbb{S}^1)^2\times [0,1]$, attached to two thickened disks (one for each boundary component, along possibly very different slopes $s,s'$) and capped off with two balls. 

We now sketch a way of triangulating $L_{p/q}$ that emulates this construction: although it will not be needed in the sequel, it might provide some geometric intuition (the triangulation described here will turn out to be combinatorially equivalent to the Delaunay decomposition of $L_{p/q}$).

Consider the standard unit torus $T:=\mathbb{R}^2/\mathbb{Z}^2$ decomposed into two simplicial triangles, $(0,0)(0,1)(1,1)$ and $(0,0)(1,0)(1,1)$. We can simplicially attach two faces of a tetrahedron $\Delta$ to $T$, so that $\Delta$ materializes an \emph{exchange of diagonals} in the unit square. The union $T\cup\Delta$ is now a (partially) thickened torus, whose top and bottom boundaries are triangulated in two different ways. We can attach a new tetrahedron $\Delta'$, e.g. to the top boundary, so as to perform a new exchange of diagonals. Iterating the process many times, we can obtain a triangulation of (possibly a retract of) $T\times [0,1]$ with top and bottom triangulated (into two triangles each) in two essentially arbitrary ways. Finally, there exists a standard way of folding up the top boundary $T\times \{1\}$ on itself, identifying its two triangles across an edge: this was perhaps first formulated that way in \cite{jaco}. The result after folding-up is a \emph{solid torus}, also described with many pictures in \cite{ananas}. (In that paper, we show that such triangulated solid tori also arise naturally in the Delaunay decompositions of many \emph{hyperbolic} manifolds, namely, large ``generic'' Dehn fillings.) If we fold up the bottom $T\times \{0\}$ in a similar way, it turns out we can get any $L_{p/q}$ with $p\nequiv \pm 1 ~[\text{mod }q]$.

The main theorems below (\ref{thm:main} and \ref{thm:nofaces}) describe this same triangulation in a way that is self-contained and completely explicit, although perhaps less synthetic or helpful than the process described above. The interested reader may infer the equivalence of the two descriptions from the proof of Theorem \ref{thm:nofaces}; see also \cite{ananas}.

\subsection{Strategy}

Let $\mathbb{T}:=(\mathbb{R}/2\pi\mathbb{Z})^2$ be the standard torus and $\iota:\mathbb{T}\rightarrow \mathbb{C}^2\simeq\mathbb{R}^4$ denote the standard injection, satisfying $$\iota(u,v)=(\cos u, \sin u, \cos v, \sin v).$$ The subgroup $\Gamma:=\{\tau_k=(k\frac{2\pi}{q}, kp\frac{2\pi}{q})\}_{k\in \mathbb{Z}}$ of $\mathbb{T}$ is such that $\iota(\Gamma)=\mathcal{O}$, the orbit of $(1,0,1,0)\in \mathbb{R}^4$ under $\varphi$. Therefore, each top-dimensional cell (tetrahedron, as it turns out) in $\partial \Pi$ is spanned by the images under $\iota$ of four points $\tau,\tau',\tau'',\tau'''$ of $\Gamma$.

Our main result, Theorem \ref{thm:main}, claims that $\tau,\dots,\tau'''$ are the vertices of certain \emph{parallelograms} of $\mathbb{T}$ with the minimal possible area, namely $\frac{(2\pi)^2}{q}$. To prove this, the strategy is to consider a linear form $\rho:\mathbb{R}^4\rightarrow \mathbb{R}$ that takes the same value, say $Z>0$, on $\iota(\tau),\dots,\iota(\tau''')$; then look (e.g. in the chart $[-\pi,\pi]^2$) at the level curve $\gamma=(\rho\circ\iota)^{-1}(Z)$.

Lemma \ref{lem:convex} says that if $Z$ and the coefficients of $\rho$ satisfy certain inequalities, then $\gamma$ is a \emph{convex} Jordan curve passing through $\tau,\dots,\tau'''$. Intuitively, if the hyperplane $\rho^{-1}(Z)$ passes \emph{far enough} from the origin of $\mathbb{R}^4$ (in a sense depending on the direction of $\ker \rho$), it will only skim a small cap off $\iota(\mathbb{T})$ that looks convex in the chart. Convexity is key: it will imply that no other point of $\Gamma$ than $\tau,\dots,\tau'''$ lies inside $\gamma$, i.e.~in $(\rho\circ\iota)^{-1}[Z,+\infty)$. In other words, $\rho^{-1}(Z)\supset \iota(\{\tau,\dots,\tau'''\})$ is a supporting plane of the convex hull of $\iota(\Gamma)=\mathcal{O}$.

Proving that $Z$ and the coefficients of $\rho$ satisfy the inequalities of Lemma \ref{lem:convex} will be the trickier part of the work, done in Section \ref{sec:proof} using only basic trigonometry.

\subsection{Notation}

Until the end of Section \ref{sec:proof}, we fix $q\geq 5$ and $p\in \llbracket 2,q-2 \rrbracket$ coprime to $q$, so that $Q:=\frac{p}{q}$ is a rational of $(0,1)$ in reduced form. We denote by $x_0$ the point $(1,1)$ of $\mathbb{C}^2$, and by $x_k$ the $k$-th iterate of $x_0$ under the map $\varphi: (z,z') \mapsto ( e^{\frac{2i\pi}{q}} z, e^{\frac{2ip\pi}{q}} z')$. Finally we let $\Pi$ be the convex hull of $x_0,\dots,x_{q-1}$. We identify $\mathbb{R}^4$ with $\mathbb{C}^2$ in the standard way. The transpose of a matrix $M$ is written $M^t$.

By \emph{Farey graph}, we mean the graph obtained by connecting two rationals $\frac{\alpha}{a}, \frac{\beta}{b}$ of $\mathbb{P}^1\mathbb{R}=\partial_{\infty}\mathbb{H}^2$ by a geodesic line in $\mathbb{H}^2$ whenever $|\alpha b -\beta a|=1$ (this graph consists of the ideal triangle $01\infty$ reflected in its sides \emph{ad infinitum}, and $\text{PSL}_2\mathbb{\mathbb{Z}}\subset \text{PSL}_2\mathbb{R}\simeq \text{Isom}^+(\mathbb{H}^2)$ acts faithfully transitively on oriented edges). For example, two rationals connected by a Farey edge are called \emph{Farey neighbors}. Refer to \cite{farey} for the classical 
casting of continued fractions in terms of the Farey graph.

\section{Main result: description of the faces of $\Pi$} \label{sec:mainresult}

\begin{theorem}
Let $A=\frac{\alpha}{a}, B=\frac{\beta}{b} \in [0,1]$ be Farey neighbors such that $Q=\frac{p}{q}$ lies strictly between $A$ and $B$, at most one of $A,B$ is a Farey neighbor of $Q$, and at most one of $A,B$ is a Farey neighbor of $\infty$ (i.e.~belongs to $\{0,1\}$). Then $x_0,x_a,x_b,x_{a+b}$ span a top-dimensional cell (tetrahedron) of $\Pi$. \label{thm:main}
\end{theorem}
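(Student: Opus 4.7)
The plan is exactly the one announced in the Strategy paragraph above: find a linear form $\rho:\mathbb{R}^4\to\mathbb{R}$ and a value $Z>0$ such that $(\rho\circ\iota)(\tau)=Z$ on the four candidate vertices $\tau\in\{\tau_0,\tau_a,\tau_b,\tau_{a+b}\}$, apply Lemma \ref{lem:convex} to see that the resulting level curve $\gamma=(\rho\circ\iota)^{-1}(Z)\subset[-\pi,\pi]^2$ is convex, and deduce from convexity that no other $\tau_m\in\Gamma$ lies on the super-level side $(\rho\circ\iota)^{-1}(Z,+\infty)$. The hyperplane $\rho^{-1}(Z)$ will then support $\Pi$ along precisely the tetrahedron with vertices $\iota(\tau_0),\iota(\tau_a),\iota(\tau_b),\iota(\tau_{a+b})$, which is the desired conclusion.

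First I would fix good $\mathbb{R}^2$-lifts of the $\tau_k$'s. Assuming (WLOG) $A<Q<B$, so that $\beta a-\alpha b=1$, set $\widetilde{\tau}_a:=\tfrac{2\pi}{q}(a,\,ap-\alpha q)$, $\widetilde{\tau}_b:=\tfrac{2\pi}{q}(b,\,bp-\beta q)$, and $\widetilde{\tau}_{a+b}:=\widetilde{\tau}_a+\widetilde{\tau}_b$. The Farey identity immediately gives $|\det(\widetilde\tau_a,\widetilde\tau_b)|=(2\pi)^2/q$, so the parallelogram $P$ with sides $\widetilde\tau_a,\widetilde\tau_b$ is a fundamental domain for $\Gamma$ in $\mathbb{T}$; in particular its interior contains no other lift $\widetilde\tau_m$. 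Next, writing $(\rho\circ\iota)(u,v)=R\cos(u-\phi)+R'\cos(v-\phi')$ and exploiting the four-fold symmetry $\widetilde\tau_0+\widetilde\tau_{a+b}=\widetilde\tau_a+\widetilde\tau_b$, the choice $\phi=(u_a+u_b)/2$, $\phi'=(v_a+v_b)/2$ (with $u_k,v_k$ the coordinates of $\widetilde\tau_k$) collapses the four equations into just two --- one for $\{\tau_0,\tau_{a+b}\}$ and one for $\{\tau_a,\tau_b\}$ --- and these pin $\rho$ down uniquely up to scalar, via the explicit relation $R\sin(u_a/2)\sin(u_b/2)=-R'\sin(v_a/2)\sin(v_b/2)$ together with an explicit formula for $Z$.

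The heart of the argument is then to verify that this explicit $\rho$ satisfies the inequalities demanded by Lemma \ref{lem:convex}, and this is the step I expect to be hardest --- presumably the content of the ``big computation'' of Claim \ref{cla:key}. Here the two Farey conditions both enter decisively: forbidding both of $A,B$ from being Farey neighbors of $\infty$ keeps $u_a+u_b$ bounded away from $2\pi$ so that $\gamma$ really does sit in one chart, while forbidding both of $A,B$ from being Farey neighbors of $Q$ prevents $\widetilde\tau_a$ or $\widetilde\tau_b$ from being so short that the hyperplane $\rho^{-1}(Z)$ crashes too deep into $\iota(\mathbb{T})$ and loses the ``convex cap'' behaviour. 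I would turn these qualitative pictures into concrete trigonometric inequalities on $u_a,u_b,v_a,v_b$ by repeated sum-to-product manipulations, the estimates ultimately boiling down to size and sign information on $a,b,\alpha,\beta,p,q$.

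Once Lemma \ref{lem:convex} is in hand, $\gamma$ is a convex Jordan curve through the four corners of $P$. Since the interior of $P$ contains no other $\widetilde\tau_m$, any remaining lift on the super-level side would have to live in one of the four ``lunes'' bounded by a side of $P$ and the corresponding arc of $\gamma$. These I would rule out by a direct geometric argument: convexity of $\gamma$ forces each lune to lie in a narrow strip outside the corresponding edge of $P$, and the nearest $\widetilde\tau_m$'s in the relevant directions (integer combinations of $\widetilde\tau_a$, $\widetilde\tau_b$ produced by the Farey neighbours of $A$ and $B$) are kept strictly outside $\gamma$ by the very estimates produced in the previous paragraph. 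This shows the four listed $x_k$'s span a face of $\Pi$, and since the parallelogram has positive area and sits in a non-flat cap of $\iota(\mathbb{T})$, the four points are affinely independent, so the face is indeed a tetrahedron.
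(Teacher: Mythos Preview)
Your outline matches the paper's approach: compute the essentially unique linear form $\rho$ taking the same value $Z$ on the four candidate vertices, then verify the hypotheses of Lemma~\ref{lem:convex} so that the level curve $\gamma$ is strictly convex. Two steps you treat as routine in fact require real work.

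First, your final ``lune'' argument is both more complicated than necessary and, as written, incomplete. Once $\gamma$ is a strictly convex Jordan curve through the four vertices of a fundamental parallelogram $D$ of $\Lambda$, the exclusion of all other lattice points is a purely combinatorial fact (the paper isolates it as Proposition~\ref{prop:convexmiss}) and needs no further trigonometric estimates: by strict convexity, the line through each side of $D$ meets the closed region bounded by $\gamma$ in exactly that side, so the region minus its four corners lies in the union of the two open strips between opposite edge-lines of $D$, and that union contains no points of $\Lambda$. Your plan to ``check the nearest $\widetilde\tau_m$'s in the relevant directions'' and to invoke ``the very estimates produced in the previous paragraph'' is neither needed nor clearly sufficient---it is not obvious a priori which or how many lattice points one would have to test.

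Second, the affine independence of $x_0,x_a,x_b,x_{a+b}$ in $\mathbb{R}^4$ is not as automatic as ``the parallelogram has positive area and sits in a non-flat cap of $\iota(\mathbb{T})$'': the map $\iota$ is not affine, so positive area in $\mathbb{T}$ does not by itself prevent the four images from being coplanar in $\mathbb{R}^4$. The paper devotes all of Proposition~\ref{prop:invertible} to computing the relevant $4\times4$ determinant and reducing its nonvanishing to a pair of tangent inequalities; the Farey hypotheses (in particular Proposition~\ref{prop:notq2}, ruling out $a,a',b,b'=q/2$) enter here in an essential way. Without this step you cannot conclude that the face is a tetrahedron rather than a lower-dimensional cell.
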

Note that in the simplest case $\frac{p}{q}=\frac{2}{5}$, there is only one pair $\{\frac{\alpha}{a},\frac{\beta}{b}\}=\{\frac{1}{3},\frac{1}{2}\}$.
Theorem \ref{thm:main} will be proved in Section \ref{sec:proof}. Meanwhile, we check (Theorem \ref{thm:nofaces}) that there are no \emph{other} top-dimensional faces in $\partial \Pi$. Note that we make no assumption on whether $A<B$ or $B<A$, or on whether $a<b$ or $b<a$ (all four possibilities can arise), so we will always be able to switch $A$ and $B$ for convenience. 

\begin{remark} \label{rem:order}
It is well-known that the number of unordered pairs of rationals $\{\frac{\alpha}{a},\frac{\beta}{b}\}$ satisfying the hypotheses of Theorem \ref{thm:main} is $n-3$, where $n$ is the sum of all coefficients of the continued fraction expansion 
of $Q$. Moreover, these pairs are naturally ordered: the first pair is $\{\frac{0}{1},\frac{1}{2}\}$ or $\{\frac{1}{2},\frac{1}{1}\}$ according to the sign of $Q-\frac{1}{2}$; the pair coming after $\{\frac{\alpha}{a},\frac{\beta}{b}\}$ is either $\{\frac{\alpha}{a},\frac{\alpha+\beta}{a+b}\}$ or $\{\frac{\alpha+\beta}{a+b},\frac{\beta}{b}\}$. Reversing this, the pair coming \emph{before} $\{\frac{\alpha}{a},\frac{\beta}{b}\}$ is $\{\frac{\min (\alpha,\beta)}{\min(a,b)},\frac{|\alpha-\beta|}{|a-b|}\}$. The last pair $\{\frac{\alpha}{a},\frac{\beta}{b}\}$ contains exactly one Farey neighbor of $\frac{p}{q}$ and is such that $\frac{\alpha+\beta}{a+b}$ is another Farey neighbor of $\frac{p}{q}$: therefore that last pair satisfies either $\frac{\alpha+(\alpha+\beta)}{a+(a+b)}=\frac{p}{q}$ or $\frac{(\alpha+\beta)+\beta}{(a+b)+b}=\frac{p}{q}$. \end{remark}

\begin{theorem} \label{thm:nofaces}
All top-dimensional faces of $\Pi$ are tetrahedra whose vertices are of the form $x_n x_{n+a} x_{n+b} x_{n+a+b}$ with $a,b$ as in Theorem \ref{thm:main}, and $n\in\mathbb{Z}$.
\end{theorem}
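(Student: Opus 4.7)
My plan is to show that the $\varphi$-orbits of the tetrahedra delivered by Theorem \ref{thm:main} already cover $\partial\Pi$ entirely. Let $\mathcal{T}$ be the collection of tetrahedra $\varphi^n(x_0 x_a x_b x_{a+b})$, where $\{\alpha/a,\beta/b\}$ ranges over the Farey pairs satisfying the hypotheses of Theorem \ref{thm:main} and $n$ over $\mathbb{Z}/q\mathbb{Z}$. Each $T\in\mathcal{T}$ is a genuine top-dimensional face of $\Pi$ by Theorem \ref{thm:main} and the $\varphi$-invariance of $\Pi$, so it suffices to prove $\bigcup_{T\in\mathcal{T}}T=\partial\Pi$.

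The key step is the following gluing claim: every triangular face $F$ of every $T\in\mathcal{T}$ is contained in exactly one other $T'\in\mathcal{T}$. Once this is established, $U:=\bigcup_{T\in\mathcal{T}}T$ is a closed (i.e.\ boundaryless) topological 3-submanifold of the 3-sphere $\partial\Pi$ with non-empty interior, and by connectedness of $\partial\Pi$ we obtain $U=\partial\Pi$.

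To prove the gluing claim I would fix a representative $T=x_0 x_a x_b x_{a+b}$ for the pair $\{\alpha/a,\beta/b\}$, assume $a<b$ by the symmetry of Theorem \ref{thm:main}, and match the four triangular faces $\{0,a,b\}$, $\{a,b,a+b\}$, $\{0,a,a+b\}$, $\{0,b,a+b\}$ one at a time. By Remark \ref{rem:order}, the pair $\{A,B\}$ sits in a finite chain of valid pairs with at most one predecessor and one successor. When a predecessor exists it is $\{\alpha/a,(\beta-\alpha)/(b-a)\}$, whose tetrahedron $T_P=x_0 x_a x_{b-a} x_b$ meets $T$ across $\{0,a,b\}$, and $\varphi^a T_P$ meets $T$ across $\{a,b,a+b\}$. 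When a successor exists, its tetrahedron $T_S$ (either $x_0 x_a x_{a+b} x_{2a+b}$ or $x_0 x_{a+b} x_b x_{a+2b}$, depending on which side of the mediant $Q$ lies on) together with an appropriate $\varphi^{-a}$- or $\varphi^{-b}$-translate handles $\{0,a,a+b\}$ and $\{0,b,a+b\}$. If $\{A,B\}$ is the first pair in the chain, the missing-predecessor faces are matched instead by $\varphi^{\mp 1}T$. If $\{A,B\}$ is the last pair, one has either $2a+b\equiv 0$ or $a+2b\equiv 0\pmod q$, so that $x_{a+b}$ coincides with $x_{-a}$ or $x_{-b}$, and the missing-successor faces are then matched by $\varphi^{\pm a}T$ or $\varphi^{\pm b}T$ respectively.

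The main obstacle is this boundary-case analysis: at the extremes of the Farey chain one must verify that the $\varphi$-translates of $T$ take over the role of the missing predecessor/successor in a globally coherent way, so that no triangle lies in three tetrahedra and none is left unmatched. The warm-up $p/q=2/5$, where the unique pair $\{0/1,1/2\}$ is simultaneously first and last and the five $\varphi$-translates of $T=x_0 x_1 x_2 x_3$ are cyclically glued along all four of its triangular faces, displays the pattern to which the general verification reduces.
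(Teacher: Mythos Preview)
Your proposal is correct and follows essentially the same route as the paper: both reduce the statement to showing that every $2$-face of every $T_{a,b}$ is shared with another tetrahedron in the $\varphi$-orbit of the collection, then carry out the identical four-way case analysis (predecessor pair for the faces $\{0,a,b\}$ and $\{a,b,a+b\}$, successor pair for $\{0,a,a+b\}$ and $\{0,b,a+b\}$, with $\varphi^{\pm 1}T$ and $\varphi^{\pm b}T$ substitutions at the two ends of the Farey chain). Your worry that a triangle might lie in three tetrahedra is unfounded---this is automatic from $\Pi$ being a convex $4$-polytope---and your topological wrap-up (closed $3$-submanifold of the $3$-sphere $\partial\Pi$, hence everything by connectedness) is just a more explicit phrasing of the paper's one-line ``it is enough''.
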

\begin{proof}
Assuming Theorem \ref{thm:main}, it is enough to find a tetrahedron of the given form, adjacent to every face of the tetrahedron $T_{a,b}:=x_0 x_a x_b x_{a+b}$ (but possibly with a different pair $\{a,b\}$).

\medskip

First, the faces of $T_{a,b}$ obtained by dropping $x_0$ or $x_{a+b}$ indeed have neighbors:

If $T_{a,b}$ is the first tetrahedron for the ordering, Remark \ref{rem:order} implies $T_{a,b}=T_{1,2}=x_0x_1x_2x_3$. The face $x_0x_1x_2$ of $T_{a,b}$ (obtained by dropping $x_3$) is adjacent to $\varphi^{-1}(T_{a,b})=x_{-1}x_0x_1x_2$, and similarly the face $x_1x_2x_3$ obtained by dropping $x_0$ is adjacent to $\varphi(T_{a,b})=x_1x_2x_3x_4$.

If $T_{a,b}$ is \emph{not} the first tetrahedron, then we can assume $a<b$ and by Remark \ref{rem:order} there is a previous tetrahedron $T_{b-a,a}$. The face $x_0 x_a x_b$ of $T_{a,b}$ is adjacent to $T_{b-a,a}=x_0 x_{b-a} x_a x_b$; the face $x_a x_b x_{a+b}$ of $T_{a,b}$ is adjacent to $\varphi^a(T_{b-a,a})=x_a x_b x_{2a} x_{a+b}$.

\medskip

Lastly, the faces of $T_{a,b}$ obtained by dropping $x_a$ or $x_b$ also have neighbors:

If $T_{a,b}$ is the last tetrahedron, then Remark \ref{rem:order} implies $a+2b=q$ (up to switching $a,b$), hence $x_{a+b}=x_{-b}$ (because $x_q=x_0$). Therefore the face $x_0 x_a x_{a+b}=x_0 x_a x_{-b}$ of $T_{a,b}$ is adjacent to $\varphi^{-b}(T_{a,b})=x_{-b}x_{a-b}x_0 x_a$, and the face $x_0 x_b x_{a+b}=x_{a+2b} x_b x_{a+b}$ of $T_{a,b}$ is adjacent to $\varphi^{b}(T_{a,b})=x_{b}x_{a+b}x_{2b} x_{a+2b}$.

If $T_{a,b}$ is \emph{not} the last tetrahedron, then up to switching $a,b$ there is, by Remark 2, a next tetrahedron $T_{a,a+b}$. Therefore the face $x_0 x_a x_{a+b}$ of $T_{a,b}$ is adjacent to $T_{a,a+b}=x_0 x_a x_{a+b} x_{2a+b}$, and the face $x_0 x_b x_{a+b}$ of $T_{a,b}$ is adjacent to $\varphi^{-a}(T_{a,a+b})=x_{-a} x_0 x_b x_{a+b}$.
\end{proof}

\section{Main tools} \label{sec:tools}

Under the assumptions of Theorem \ref{thm:main}, and before we start its proof proper, let us introduce some tools. These are of two types: arithmetic properties of the integers appearing in the Farey diagram (Section \ref{sec:farey}), and geometric properties of the standard embedding $\iota$ of $\mathbb{S}^1\times \mathbb{S}^1$ into $\mathbb{R}^2\times \mathbb{R}^2$ (especially its intersections with hyperplanes), in Section \ref{sec:tori}.

\subsection{Farey relationships on integers} \label{sec:farey}

Let $X=\frac{\xi}{x}=\frac{\alpha+\beta}{a+b}$ and $Y=\frac{\eta}{y}=\frac{|\alpha-\beta|}{|a-b|}$ be the two common Farey neighbors of $A$ and $B$ ($X$ is closer to $Q$ while $Y$ is closer to $\infty=\frac{1}{0}$; we have $X,Y\in[0,1]$). We introduce the notation
$$\frac{u}{v} \wedge \frac{s}{t}:=|ut-vs|$$
for any two rationals $\frac{u}{v}, \frac{s}{t}$ in reduced form. For example, if $h,h'$ are rational, then $h\wedge h'=1$ if and only if $h,h'$ are Farey neighbors; moreover, the denominator of $h$ is always equal to $h\wedge \infty$.

\medskip
\noindent We thus have $\left \{\begin{array}{c}
a=A\wedge\infty \\
b=B\wedge\infty \\
x=X\wedge\infty \\
y=Y\wedge\infty \\
q=Q\wedge\infty  \end{array} \right .$
and we define $\left \{\begin{array}{c}
a':=A\wedge Q \\
b':=B\wedge Q \\
x':=X\wedge Q \\
y':=Y\wedge Q \end{array} \right .$, all positive.

\begin{proposition} \label{prop:abab}
One has $\left \{\begin{array}{ccc} a+b&=&x \\ |a-b|&=&y \end{array} \right .$, and $\left \{ \begin{array}{ccc}a'+b'&=&y' \\ |a'-b'|&=& x' \end{array}\right .$, and $$a'b+b'a=q.$$
\end{proposition}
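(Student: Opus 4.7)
The plan is to deduce all three identities from the Farey condition $|\alpha b - \beta a|=1$ combined with careful sign analysis. For Part 1, a gcd argument suffices: any common divisor of $\alpha+\beta$ and $a+b$ divides $a(\alpha+\beta)-\alpha(a+b)=a\beta-\alpha b=\pm 1$, so the mediant $X=(\alpha+\beta)/(a+b)$ is automatically in lowest terms and $x=a+b$. The same reasoning applied to the anti-mediant $Y=|\alpha-\beta|/|a-b|$ gives $y=|a-b|$.

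For Parts 2 and 3, I would assume without loss of generality $A<Q<B$ (the opposite ordering is symmetric). This pins down all the signs: $A<B$ forces $\beta a - \alpha b = 1$, while $A<Q<B$ gives $a' = pa-\alpha q$ and $b' = \beta q-pb$. Part 3 is then immediate, since
\[
a'b + b'a = pab - \alpha qb + \beta qa - pab = q(\beta a - \alpha b) = q.
\]
The identity $x' = |a'-b'|$ in Part 2 is likewise immediate from the expansion $(\alpha+\beta)q - p(a+b) = b'-a'$.

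The one delicate point is the remaining identity $y' = a'+b'$. I would compute $(\alpha-\beta)q - p(a-b) = -a'-b'$, whose absolute value is $a'+b'$, and then match this to $y' = |\,|\alpha-\beta|q - p|a-b|\,|$. This reduces to a \emph{sign-matching} lemma: $\alpha-\beta$ and $a-b$ have the same sign (or at least one vanishes). The lemma follows by contradiction: if $\alpha<\beta$ and $a>b$ strictly, then $\beta \geq \alpha+1$ and $a \geq b+1$ yield $\beta a \geq (\alpha+1)(b+1) \geq \alpha b+2$, violating $\beta a - \alpha b = 1$; the mirror case is symmetric. The degenerate case $a=b$ is excluded by the hypothesis that at most one of $A,B$ is a Farey neighbor of $\infty$ (since $a=b$ together with the Farey relation forces $\{A,B\}=\{0,1\}$), and the case $\alpha=\beta$ forces $Y=0$ and is handled by direct substitution using $|a-b|=1$.

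The main obstacle is really just the sign-matching lemma together with the edge-case bookkeeping; the rest is routine algebra driven by $|\alpha b - \beta a|=1$.
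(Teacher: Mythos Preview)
Your proof is correct and follows essentially the same route as the paper's: the first pair of identities comes from the mediant/anti-mediant being in lowest terms, and everything else is driven by the single observation that $\alpha q-ap$ and $\beta q-bp$ have opposite signs because $Q$ lies strictly between $A$ and $B$.

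The presentation differs in one instructive way. Rather than fixing $A<Q<B$ and carrying explicit signs, the paper works entirely with absolute values, using the identities $|u|+|v|=|u-v|$ and $\bigl||u|-|v|\bigr|=|u+v|$ valid for $u,v$ of opposite sign; this yields $a'+b'=|(\alpha-\beta)q-(a-b)p|$ and $|a'-b'|=|(\alpha+\beta)q-(a+b)p|$ in one line each, with no WLOG. On the other hand, the paper then silently identifies $|(\alpha-\beta)q-(a-b)p|$ with $Y\wedge Q=\bigl|\,|\alpha-\beta|\,q-|a-b|\,p\,\bigr|$, which is exactly your sign-matching lemma (that $\alpha-\beta$ and $a-b$ cannot have strictly opposite signs). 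So your argument is slightly longer but also slightly more honest: you make explicit a step the paper treats as self-evident. Your handling of the edge cases $a=b$ and $\alpha=\beta$ is correct and appropriate under the standing hypotheses of Theorem~\ref{thm:main}.
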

\begin{proof}
The first two identities are obvious from the definitions of $X,Y$. For the next two identities, notice that $\alpha q-ap$ and $\beta q-bp$ have opposite signs, because $Q$ lies between $A$ and $B$~: therefore
$$a'+b'=|\alpha q - ap| + |\beta q - bp|=|(\alpha q-ap)-(\beta q-bp)|=|(\alpha-\beta)q-(a-b)p|=Y\wedge Q~;$$ 
$$|a'-b'|=||\alpha q-ap|-|\beta q - bp||=|(\alpha q-ap)+(\beta q-bp)|=|(\alpha+\beta)q-(a+b)p|=X\wedge Q~.$$
For the last identity, compute
\begin{eqnarray*} a'b+b'a
&=&(Q \wedge A)(\infty\wedge B)+(Q \wedge B)(\infty\wedge A) \\
&=& b|q\alpha-pa|+a|q\beta-pb| \\ 
&=& |b(q\alpha-pa)-a(q\beta-pb)| \\ 
&=& q|b\alpha-a\beta|=q(A\wedge B)=q~.
\end{eqnarray*}
\end{proof}

An easy consequence is that all of $a,a',b,b',x,x',y,y'$ are integers of $\llbracket 1,q-1\rrbracket$. Note that the properties of Proposition \ref{prop:abab} are invariant under the exchange of $(a,a')$ with $(b,b')$ and under the exchange of $(a,b,x,y)$ with $(a',b',y',x')$ (which actually amounts to swapping $Q$ and $\infty$).

\begin{proposition} \label{prop:notq2}
None of $a,a',b,b'$ is equal to $\frac{q}{2}$.
\end{proposition}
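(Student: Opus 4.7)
The plan hinges on the identity $a'b + b'a = q$ of Proposition \ref{prop:abab}, combined with the hypotheses $\gcd(p,q)=1$ and $p\in\llbracket 2,q-2\rrbracket$.  I would first dispatch the case $a'=q/2$, then reduce the case $a=q/2$ to it by a short parity argument; the cases $b=q/2$ and $b'=q/2$ then follow from the symmetry $A\leftrightarrow B$ (which swaps $a\leftrightarrow b$ and $a'\leftrightarrow b'$ and preserves both the identity and all hypotheses of Theorem \ref{thm:main}).

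For $a'=q/2$, substituting into $a'b+b'a=q$ gives $b'a=(q/2)(2-b)$, and positivity of the left side forces $b=1$, hence $b'a=q/2$.  Now $b=1$ means $B\in\{0/1,\,1/1\}$, so the definition of $b'$ yields $b'=|\beta q-p|\in\{p,\,q-p\}$; in either case $\gcd(b',q)=1$ and (this is exactly where the exclusion $p\notin\{1,q-1\}$ enters) $b'\geq 2$.  But $b'a=q/2$ implies $b'\mid q$, which combined with $\gcd(b',q)=1$ forces $b'=1$, a contradiction.

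For $a=q/2$, the evenness of $q$ together with $\gcd(p,q)=1$ makes $p$ odd, so
$$a'=|\alpha q-ap|=a\,|2\alpha-p|$$
is $a$ times a positive odd integer, whence $a'\geq a=q/2$; since also $a'\leq q-1<2a$ (from $a'b+b'a=q$ with $b,b'\geq 1$), we are pinned down to $a'=a=q/2$ and reduced to the previous case.  I do not anticipate a serious obstacle: the only step requiring a little care is the divisibility chain in the second paragraph, and even that is immediate once one notices that $\gcd(p,q)=\gcd(q-p,q)=1$.
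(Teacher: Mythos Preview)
Your proof is correct and follows essentially the same strategy as the paper: both pivot on the identity $a'b+b'a=q$ to force one of the four quantities down to $1$, then derive a contradiction from coprimality together with the exclusion $p\notin\{1,q-1\}$. The paper starts from $b=q/2$, uses $\gcd(a,b)=1$ (coming from $A\wedge B=1$), and then invokes the full fourfold symmetry $(a,a')\leftrightarrow(b,b')$, $(a,b)\leftrightarrow(a',b')$ to cover the remaining cases; you start from $a'=q/2$, use the dual fact $\gcd(b',q)=1$ (coming from $\gcd(p,q)=1$), and replace the $Q\leftrightarrow\infty$ symmetry by your short parity reduction---a slightly different bookkeeping that arrives at the same place.
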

\begin{proof}
Suppose $b=\frac{q}{2}$. Since $a'b+b'a=q$, we then have $a'=1$. We have $b'a=q-a'b=\frac{q}{2}$ so $a$ divides $\frac{q}{2}$, but $a$ is also coprime to $b=\frac{q}{2}$ (because $A\wedge B=1$). Therefore $a=1$ (which by the way means $A\in\{0,1\}$). But since $a'=1$, this implies that $A$ is a Farey neighbor both of $Q$ and $\infty$, i.e.~$Q$ has the form $\frac{1}{q}$ or $\frac{q-1}{q}$, which we ruled out in the first place.

If instead of $b$ another term of $a,a',b,b'$ is equal to $\frac{q}{2}$, then we can apply the same argument, up to permuting $a,a',b,b'$.
\end{proof}

Notice, however, that one of $a,a',b,b'$ could be \emph{larger} than $\frac{q}{2}$.

\subsection{Level curves on the torus} \label{sec:tori}

Let $\mathbb{T}:=(\mathbb{R}/2\pi\mathbb{Z})^2$ be the standard torus and $\iota:\mathbb{T}\rightarrow \mathbb{C}^2\simeq\mathbb{R}^4$ denote the standard injection, satisfying $$\iota(u,v)=(\cos u, \sin u, \cos v, \sin v).$$ The subgroup $\Gamma=\{\tau_k=(k\frac{2\pi}{q}, kp\frac{2\pi}{q})\}_{k\in \mathbb{Z}}$ of $\mathbb{T}$ lifts to an affine lattice $\Lambda$ of the universal cover $\mathbb{R}^2$ of $\mathbb{T}$. The index of $2\pi\mathbb{Z}^2$ in $\Lambda$ is $q$. Rationals $A,B$ are still as in Theorem \ref{thm:main}.

\begin{proposition} \label{prop:basis}
Define the lifts $u=(a\frac{2\pi}{q},ap\frac{2\pi}{q}-2\alpha\pi)$ and $v=(b\frac{2\pi}{q},bp\frac{2\pi}{q}-2\beta\pi)$ of $\tau_a$ and $\tau_b$ respectively. Also define the center $\overline{c}:=\frac{1}{2}(u+v)$ of the parallelogram $D:=(0,u,u+v,v)$ of $\mathbb{R}^2$. Then $(u,v)$ is a basis of the lattice $\Lambda$, and $D$ is contained in the square $\overline{c}+(-\pi,\pi)^2$.
\end{proposition}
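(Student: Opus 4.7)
The plan splits naturally along the two assertions of the proposition.

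For the basis claim, first note that $u,v \in \Lambda$: by construction they differ from the natural lifts $(2\pi a/q,\, 2\pi a p/q)$ and $(2\pi b/q,\, 2\pi b p/q)$ of $\tau_a,\tau_b$ by the vectors $(0,-2\alpha\pi)$ and $(0,-2\beta\pi)$, which lie in $2\pi\mathbb{Z}^2 \subset \Lambda$. Since $\Lambda$ has index $q$ in $2\pi\mathbb{Z}^2$, its covolume is $(2\pi)^2/q$, so it suffices to check $|\det(u,v)| = (2\pi)^2/q$. Pulling a common factor $(2\pi/q)^2$ out of the determinant leaves $a(bp-\beta q) - b(ap-\alpha q) = q(b\alpha - a\beta)$, and $|b\alpha - a\beta| = A\wedge B = 1$ since $A,B$ are Farey neighbors.

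For the containment claim, the four vertices of $D$ sit at offsets $\pm(u+v)/2$ and $\pm(u-v)/2$ from the center $\overline{c}$, so the goal is to show that each coordinate of $u+v$ and $u-v$ has absolute value strictly less than $2\pi$. For the first coordinate, $u_1 = 2\pi a/q$ and $v_1 = 2\pi b/q$ are both positive, so $|u_1 \pm v_1|/2$ equals $\pi(a+b)/q = \pi x/q$ and $\pi|a-b|/q = \pi y/q$ by Proposition \ref{prop:abab}; both are strictly less than $\pi$ since $x, y \in \llbracket 1, q-1\rrbracket$. For the second coordinate, rewrite $u_2 = 2\pi a(Q-A)$ and $v_2 = 2\pi b(Q-B)$, which gives $|u_2| = 2\pi a'/q$ and $|v_2| = 2\pi b'/q$; but the hypothesis that $Q$ lies strictly between $A$ and $B$ forces $u_2$ and $v_2$ to have \emph{opposite} signs. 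This swaps the roles of sum and difference, yielding $|u_2 \pm v_2|/2 = \pi|a'-b'|/q = \pi x'/q$ and $\pi(a'+b')/q = \pi y'/q$, again both strictly less than $\pi$ since $x', y' \in \llbracket 1, q-1\rrbracket$.

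The one subtle ingredient is that sign observation for the second coordinate: without the betweenness hypothesis of Theorem \ref{thm:main}, the sum and difference could not swap roles and $D$ could overflow the square $\overline{c} + (-\pi,\pi)^2$. Everything else is a direct combination of Proposition \ref{prop:abab} with the remark, noted just after it, that $a,a',b,b',x,x',y,y'$ all lie in $\llbracket 1, q-1\rrbracket$.
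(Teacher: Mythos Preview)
Your proof is correct and follows essentially the same approach as the paper's: compute the covolume via $\det(u,v)$ for the basis claim, and for the containment claim observe that the first coordinates of $u,v$ are positive while the second coordinates have opposite signs (by the betweenness hypothesis), so that in each coordinate the larger of $|u_i\pm v_i|$ equals $|u_i|+|v_i|<2\pi$. One small slip: you wrote ``$\Lambda$ has index $q$ in $2\pi\mathbb{Z}^2$'', but it is the other way around ($2\pi\mathbb{Z}^2$ has index $q$ in $\Lambda$); your stated covolume $(2\pi)^2/q$ is nonetheless correct.
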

\begin{proof}
Clearly, $\Lambda \subset \mathbb{R}^2$ has covolume $(2\pi)^2/q$. On the other hand, the determinant of $(u,v)$ is $2\pi\frac{2\pi}{q}(\alpha b-a\beta)=\pm(2\pi)^2/q$, so $(u,v)$ is a basis of $\Lambda$.

The abscissae of $u,v$ are clearly positive, and their sum is $\frac{a+b}{q}2\pi=\frac{x}{q}2\pi<2\pi$.

The ordinates $2\pi a(Q-A)$ of $u$ and $2\pi b(Q-B)$ of $v$ have opposite signs, and the sum of their absolute values is $$2\pi \left ( \left | \frac{ap-\alpha q}{q} \right |+ \left |\frac{bp-\beta q}{q}\right | \right )= 2\pi \frac{A\wedge Q + B\wedge Q}{q}=2\pi\frac{y'}{q}<2\pi~,$$
by Proposition \ref{prop:abab}. This proves the claim on $D$. \end{proof}

\begin{definition} \label{def:center} Let $c=\left (\frac{a+b}{q}\pi,[p\frac{a+b}{q} - (\alpha+\beta)]\pi \right )$ denote the projection of $\overline{c}$ to the torus $\mathbb{T}=(\mathbb{R}/2\pi\mathbb{Z})^2$. \end{definition}

\begin{proposition} \label{prop:convexmiss}
Let $\Lambda\subset\mathbb{R}^2$ be a lattice and $P$ be a strictly convex, compact region of $\mathbb{R}^2$ such that $\Lambda\cap \partial P$ consists of the four vertices of a fundamental parallelogram of $\Lambda$. Then $\Lambda\cap P=\Lambda\cap \partial P$ (i.e.~$P$ contains no other lattice points). 
\end{proposition}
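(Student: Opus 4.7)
My plan is a proof by contradiction, combining a $\Lambda/2\Lambda$ pigeonhole with a midpoint descent. Assume towards contradiction that $\Lambda \cap P$ contains a lattice point $\lambda$ other than the four given vertices $v_0,v_1,v_2,v_3$ of a fundamental parallelogram $F$ of $\Lambda$; by the hypothesis on $\Lambda \cap \partial P$, any such $\lambda$ must lie in $\mathrm{int}(P)$.

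The starting observation is that $\Lambda/2\Lambda$ has cardinality $4$, and the four vertices of any fundamental parallelogram of $\Lambda$ represent all four cosets. Hence each interior lattice point $\lambda$ shares its class mod $2\Lambda$ with a \emph{unique} vertex $v(\lambda)$, so the midpoint $\mu(\lambda) := \tfrac{1}{2}(\lambda + v(\lambda))$ again lies in $\Lambda$. Since $v(\lambda) \in \partial P$ and $\lambda \in \mathrm{int}(P)$ are distinct, convexity puts $\mu(\lambda)$ on the open segment between them, hence in $\mathrm{int}(P)$; so $\mu$ is a well-defined self-map of the finite set $S := \Lambda \cap \mathrm{int}(P)$.

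Iterating $\mu$ from any $\lambda_0 \in S$ must eventually cycle, say $\lambda_k = \lambda_{k+T}$, where $\lambda_{j+1} = \tfrac{1}{2}\lambda_j + \tfrac{1}{2} w_j$ with each $w_j = v(\lambda_j) \in \{v_0,v_1,v_2,v_3\}$. Telescoping the recursion around the cycle yields
\[
\lambda_k \;=\; \frac{1}{2^T-1}\sum_{j=0}^{T-1} 2^{j}\, w_{k+j},
\]
a convex combination of vertices of $F$ (the weights are positive and sum to $(2^T-1)/(2^T-1)=1$). Hence $\lambda_k \in F$; but $F \cap \Lambda$ consists of exactly the four vertices of $F$, all lying in $\partial P$, contradicting $\lambda_k \in \mathrm{int}(P)$. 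This forces $S = \emptyset$, giving the result.

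The step I expect to be the crux is the cyclic descent: one must check that (a) the $2\Lambda$-coset pigeonhole always supplies a vertex $v(\lambda)$ to average with, so that the iteration stays in $S$, and (b) the telescoping identity really does produce a \emph{convex} combination, since that is the point that forces $\lambda_k \in F$ and thereby triggers the contradiction with $F\cap\Lambda = \{v_0,v_1,v_2,v_3\}$. Note that strict convexity of $P$, though assumed, is never actually invoked — ordinary convexity already pushes the midpoint of an interior-boundary pair into $\mathrm{int}(P)$.
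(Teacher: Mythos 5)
Your proof is correct, and it takes a genuinely different route from the paper's. The paper normalizes to $\Lambda = \mathbb{Z}^2$ with $\{0,1\}^2 \subset \partial P$ and then argues geometrically: by strict convexity each of the four lines bounding the unit square meets $P$ in exactly one side of the square, which forces $P \smallsetminus \{0,1\}^2 \subset \bigl((0,1)\times\mathbb{R}\bigr) \cup \bigl(\mathbb{R}\times(0,1)\bigr)$, a set with no lattice points. Your argument instead uses the $\Lambda/2\Lambda$ pigeonhole to pair any hypothetical interior lattice point with a congruent vertex, shows the midpoint map $\mu$ preserves $S = \Lambda \cap \mathrm{int}(P)$, and derives a contradiction from the cyclic telescoping identity $\lambda_k = \tfrac{1}{2^T-1}\sum_{j=0}^{T-1} 2^j w_{k+j}$, which exhibits $\lambda_k$ as a convex combination of the four vertices and hence a point of $F \cap \Lambda \subset \partial P$. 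Each step checks out: the four vertices of a fundamental parallelogram do represent the four classes of $\Lambda/2\Lambda$; $\mu(\lambda)$ lands in $\mathrm{int}(P)$ because an open segment from an interior point to a closure point of a convex body lies in the interior; and the weights $2^j/(2^T-1)$ are positive and sum to $1$. What your approach buys is generality: as you note, ordinary convexity suffices, whereas the paper's one-line argument uses strict convexity essentially (without it, the chord $\mathbb{R}\times\{0\}\cap P$ could properly contain $[0,1]\times\{0\}$). What it costs is length and a slightly less transparent geometric picture. Both are valid; yours would be the one to cite if one ever wanted the statement without the strictness hypothesis.
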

\begin{proof}
Without loss of generality, $\Lambda=\mathbb{Z}^2$ and $\{0,1\}^2\subset \partial P$. Since $P$ is strictly convex, the horizontal axis $\mathbb{R}\times \{0\}$ intersects $P$ precisely along $[0,1]\times \{0\}$. A similar statement holds for each side of the unit square. Therefore $P\smallsetminus \{0,1\}^2\subset (0,1)\times \mathbb{R} \cup \mathbb{R}\times (0,1)$, which contains no other vertices of $\mathbb{Z}^2$.
\end{proof}

The idea of the proof of Theorem \ref{thm:main} is to consider a linear form $\rho:\mathbb{R}^4\rightarrow \mathbb{R}$ that takes the same value $Z>0$ on $x_0, x_a, x_b,x_{a+b}$ and check that $\rho<Z$ on all other $x_i$. This will be achieved by looking at the level curve $\gamma$ of $\rho\circ\iota$ in $\mathbb{T}$, of level $Z$, and checking that the lift of $\gamma$ to $\mathbb{R}^2$ bounds a convex body that satisfies the hypotheses of Proposition \ref{prop:convexmiss}. For this, we will need the following property and its corollary.

\begin{lemma} \label{lem:convex}
If $(U,U'),(V,V')\in \mathbb{R}^2\smallsetminus\{(0,0)\}$ and $Z\in \mathbb{R}_+^*$ satisfies $$\left |\sqrt{V^2+V'^2}-\sqrt{U^2+U'^2}\right | <Z<\sqrt{V^2+V'^2}+\sqrt{U^2+U'^2}~,$$ then the preimage of $Z$ under
$$\begin{array}{rrcl} &\mathbb{R}^2 & \rightarrow & \mathbb{R} \\
\psi~: & (x,y) & \longmapsto & (U \cos x + U'\sin x) + (V\cos y+V'\sin y) \end{array}$$
consists of a convex curve $\gamma$ (i.e.~a closed curve bounding a strictly convex domain), together with all the translates of $\gamma$ under $2\pi\mathbb{Z}^2$, which are pairwise disjoint.
\end{lemma}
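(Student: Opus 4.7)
The plan is to reduce the statement to a purely algebraic positivity claim, by using the implicit-curve curvature formula to translate convexity of $\gamma$ into the sign of a single polynomial. First, translating the $x$ and $y$ arguments by $\arctan(U'/U)$ and $\arctan(V'/V)$ respectively (a trivial change on $\mathbb{T}$), I may write
$$\psi(x,y)=R_1\cos x+R_2\cos y,\qquad R_1:=\sqrt{U^2+U'^2},\ R_2:=\sqrt{V^2+V'^2},$$
so the hypothesis becomes $|R_2-R_1|<Z<R_1+R_2$ (in particular $R_1,R_2>0$).

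Next I would establish the \emph{topology}: $\gamma$ is a single smooth Jordan curve per $2\pi\mathbb{Z}^2$-translate of the fundamental square $[-\pi,\pi]^2$. The critical points of $\psi$ are the $(k\pi,\ell\pi)$, with critical values in $\{\pm R_1\pm R_2\}$, and the hypothesis places $Z$ strictly between the second-largest value $|R_1-R_2|$ and the largest $R_1+R_2$. Hence $Z$ is a regular value (so $\gamma$ is a smooth $1$-manifold), and within each fundamental square the super-level set $\{\psi\ge Z\}$ contains exactly one critical point (the global maximum at $2\pi k$) while being disjoint from the boundary of that square (where $\psi\le|R_2-R_1|<Z$). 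A standard gradient-flow deformation retraction then identifies this super-level set with a topological disk, so its boundary is a single Jordan curve; the translates live in disjoint fundamental squares, hence are pairwise disjoint.

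The heart of the proof is the strict \emph{convexity} of the region bounded by $\gamma$. I would apply the implicit-curve curvature formula: since $\psi_{xy}=0$, its numerator at $(x,y)\in\gamma$ is
$$\psi_{xx}\psi_y^{\,2}+\psi_{yy}\psi_x^{\,2}=-R_1R_2\bigl(R_2\cos x\,\sin^2 y+R_1\cos y\,\sin^2 x\bigr).$$
Substituting $\sin^2=1-\cos^2$ and the defining relation $R_1\cos x+R_2\cos y=Z$, the parenthesized factor collapses to $R_2\cos x+R_1\cos y-Z\cos x\cos y$. Writing $r:=R_1\cos x$ (so $R_2\cos y=Z-r$) and clearing denominators, this becomes the quadratic
$$P(r)=Zr^2+(R_2^2-R_1^2-Z^2)r+R_1^2Z,$$
and strict convexity of $\gamma$ will follow as soon as $P>0$ throughout $\mathbb{R}$.

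The main obstacle, and the only non-routine step, is to verify this positivity. Since the leading coefficient $Z$ is positive, it suffices to check that the discriminant $\Delta$ of $P$ is negative. A direct expansion as an iterated difference of squares factors it completely:
$$\Delta=(R_2^2-R_1^2-Z^2)^2-4Z^2R_1^2=(R_2-R_1-Z)(R_2+R_1+Z)(R_2-R_1+Z)(R_2+R_1-Z).$$
The hypothesis $|R_2-R_1|<Z<R_1+R_2$ makes the first factor strictly negative and the remaining three strictly positive, so $\Delta<0$ and $P>0$ everywhere. The curvature of $\gamma$ therefore has constant nonzero sign, and $\gamma$ bounds a strictly convex domain, as required.
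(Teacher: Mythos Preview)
Your proof is correct and follows essentially the same approach as the paper: reduce to $\psi(x,y)=R_1\cos x+R_2\cos y$, then show the curvature of the level curve never vanishes by checking that a certain quadratic in $\cos x$ has negative discriminant, which follows from the triangle inequalities on $R_1,R_2,Z$. The only cosmetic difference is that the paper parametrizes $\gamma$ explicitly as $y=\pm f(x)=\pm\arccos\frac{Z-R_1\cos x}{R_2}$ and computes $f''$ by the chain rule (arriving at the same quadratic, up to normalization), whereas you use the implicit curvature formula and a Morse-theoretic argument for the topology; your factorization of the discriminant is also a bit cleaner than the paper's.
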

\begin{proof}
Up to shifting $x$ and $y$ by constants, we can assume $U'=V'=0$ and $U,V>0$. 
Up to exchanging $x$ and $y$, we can furthermore assume $V\geq U$, so that $0\leq V-U<Z<V+U$ and $\psi(x,y)=U\cos x + V\cos y$. Notice that $U,V,Z$ now satisfy all three strong triangular inequalities.

Let $C$ be the square $[-\pi,\pi]^2$. Let us first determine that $\gamma:=\psi^{-1}(Z)\cap C$ is a convex curve contained in the interior of $C$. If $(x,y)\in\gamma$ then $U \cos x \geq Z-V \in (-U,U)$ so $$|x|\leq \arccos \frac{Z-V}{U}\in (0,\pi)~\text{ and }~\pm y=f(x):=\arccos \frac{Z-U \cos x}{V}\in[0,\pi)~,$$
since $Z-U>-V$. Clearly, $f$ vanishes at $\pm \arccos \frac{Z-V}{U}$. Moreover, using the chain rule $(\arccos \circ\, g)''=-\frac{g''(1-g^2)+gg'^2}{(1-g^2)^{3/2}}$, computation yields
$$f''(x)=\frac{-U^2Z}{[V^2-(Z-U\cos x)^2]^{\frac{3}{2}}} \left [ 1+ \frac{V^2-Z^2-U^2}{UZ} \cos x + \cos^2 x\right ]$$
so to show $f''<0$ it is enough to check that the discriminant of the polynomial in $\cos x$ (in the right factor) is negative. This amounts to $\left |\frac{V^2-Z^2-U^2}{UZ}\right |<2$, which in turn follows from the triangular inequalities $(Z+U)^2>V^2$ and $(Z-U)^2<V^2$.

We have proved that $\gamma$ is a convex curve (the union of the graphs of $f$ and $-f$) contained in the interior of $C$: the rest of the lemma follows easily.
\end{proof}

\begin{corollary} \label{cor:patate}
Under the assumptions of Lemma \ref{lem:convex}, 
the set $H:=\psi^{-1}[Z,+\infty)$ consists of the disjoint union of all the convex domains bounded by $\gamma$ and its translates.
\end{corollary}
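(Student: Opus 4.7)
The plan is to combine Lemma \ref{lem:convex} with the Jordan curve theorem and a brief sign-tracking argument. First I would invoke Lemma \ref{lem:convex} to identify $\partial H = \psi^{-1}(Z)$ as the disjoint union of the convex Jordan curve $\gamma$ together with all its translates under $2\pi\mathbb{Z}^2$. Since the proof of the lemma shows that each such translate lies in a translated open square of side $2\pi$, every component of $\psi^{-1}(Z)$ is bounded. Consequently the open complement $\mathbb{R}^2 \setminus \psi^{-1}(Z)$ decomposes into the bounded open convex disks $I_k$ enclosed by the translates of $\gamma$, together with a single unbounded component $E$ (the common exterior of all these disks).

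Next, because $\psi - Z$ is continuous and vanishes nowhere on this complement, it has constant sign on each $I_k$ and on $E$, so it suffices to evaluate $\psi$ at one test point in each kind of region. Reducing to the normalized form $\psi(x,y) = U\cos x + V\cos y$ with $V \geq U > 0$ (exactly as in the proof of Lemma \ref{lem:convex}), the center $(0,0)$ of $\gamma$ lies in $I_0$ and gives $\psi(0,0) = U + V > Z$ by the right-hand triangular inequality, while $(\pi,\pi)$ lies in the neighboring exterior cell and satisfies $\psi(\pi,\pi) = -(U+V) < 0 < Z$. By $2\pi\mathbb{Z}^2$-invariance of $\psi$, this forces $\psi > Z$ on every $I_k$ and $\psi < Z$ on $E$, so
\[
H \;=\; \psi^{-1}(Z) \;\cup\; \bigsqcup_k I_k \;=\; \bigsqcup_k \overline{I_k},
\]
which is the disjoint union of the closed convex domains bounded by $\gamma$ and its translates, as claimed.

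No step here is genuinely difficult: the substantive content — convexity, disjointness, and the fact that each $\gamma$-translate sits inside a bounded fundamental square — is already packaged in Lemma \ref{lem:convex}. The only thing mildly requiring care is verifying that the exterior $E$ is a single connected component, but this is immediate since the $\gamma$-translates are pairwise disjoint bounded convex Jordan curves, so removing their union from $\mathbb{R}^2$ leaves the unbounded ``complement at infinity'' connected.
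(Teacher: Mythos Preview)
Your proof is correct and follows essentially the same approach as the paper: reduce to the normalized form $\psi(x,y)=U\cos x+V\cos y$ with $0<U\leq V$, test $\psi$ at the center $(0,0)$ (where $\psi=U+V>Z$) and at the corner $(\pi,\pi)$ (where $\psi=-(U+V)<Z$), and invoke constant sign on connected components of the complement of $\psi^{-1}(Z)$. The paper compresses your component/sign-tracking argument into a single appeal to the intermediate value theorem, but the content is the same.
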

\begin{proof}
Again restricting to $U'=V'=0<U\leq V$, we see that $H\cap C$ contains the origin (encircled by $\gamma$, and where $\psi$ achieves its maximum $U+V$) and does not contain $(\pi,\pi)$ (where $\psi$ achieves its minimum $-U-V$). The theorem of intermediate values allows us to conclude.
\end{proof}

\section{Proof of Theorem \ref{thm:main}} \label{sec:proof}

Identifying $\mathbb{C}^2$ with $\mathbb{R}^4$ in the standard way, the matrix with column vectors $x_0, x_a,x_b,x_{a+b}$ is
\begin{equation} \label{eq:matrixm} M:= \left ( \begin{array}{clll}
1 & \cos a \frac{2\pi}{q} & \cos b \frac{2\pi}{q} & \cos (a+b) \frac{2\pi}{q} \\ 
0 & \sin a \frac{2\pi}{q} & \sin b \frac{2\pi}{q} & \sin (a+b) \frac{2\pi}{q} \\ 
1 & \cos pa\frac{2\pi}{q} & \cos pb\frac{2\pi}{q} & \cos p(a+b)\frac{2\pi}{q} \\ 
0 & \sin pa\frac{2\pi}{q} & \sin pb\frac{2\pi}{q} & \sin p(a+b) \frac{2\pi}{q}
\end{array} \right )~. \end{equation}
We refer to $\{x_0,x_a,x_b,x_{a+b}\}$ as our \emph{candidate face}.

\subsection{Candidate faces are non-degenerate} \label{sec:nondeg}
\begin{proposition} \label{prop:invertible}
The determinant $D$ of the matrix $M$ is nonzero.
\end{proposition}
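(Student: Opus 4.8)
The goal is to show $\det M \neq 0$, i.e.\ that the four points $x_0, x_a, x_b, x_{a+b}$ are affinely independent in $\mathbb{R}^4$ (in fact linearly independent, since the matrix has no extra affine column). The natural strategy is to compute the determinant explicitly using the structure of $M$ as a block of two $2\times 2$ ``rotation-type'' columns stacked in two $\mathbb{C}$-coordinates. Writing $\omega = e^{2i\pi/q}$, the first two rows encode the complex numbers $1, \omega^a, \omega^b, \omega^{a+b}$ and the last two rows encode $1, \omega^{pa}, \omega^{pb}, \omega^{p(a+b)}$. A clean way to proceed is to pass to complex coordinates: replace rows $1,2$ by a single complex row $(1, \omega^a, \omega^b, \omega^{a+b})$ and its conjugate, similarly for rows $3,4$ with exponents multiplied by $p$; the real determinant $D$ is then (up to an explicit nonzero constant like $(-2i)^{-2}$ or similar) equal to the determinant of the $4\times 4$ complex matrix whose rows are $(1,\omega^a,\omega^b,\omega^{a+b})$, $(1,\bar\omega^a,\bar\omega^b,\bar\omega^{a+b})$, $(1,\omega^{pa},\omega^{pb},\omega^{p(a+b)})$, $(1,\bar\omega^{pa},\bar\omega^{pb},\bar\omega^{p(a+b)})$.

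The key observation is that each row has the special form $(1, s, t, st)$ for suitable $s,t$ — because $\omega^{a+b} = \omega^a\omega^b$ and $\omega^{p(a+b)} = \omega^{pa}\omega^{pb}$. A matrix of rows $(1, s_i, t_i, s_i t_i)$ factors: its determinant equals (up to sign) the product over pairs times a Vandermonde-like expression, or more simply one can perform the column operation $C_4 \leftarrow C_4 - s\cdot\text{(something)}$... but $s$ varies per row, so instead I would directly expand. A slicker route: the $2$-dimensional column span structure means $D$ can be written as a sum of $2\times 2$ minors (Cauchy–Binet / Laplace expansion along the first two rows). Concretely, expanding $D$ by the generalized Laplace rule along rows $\{1,2\}$ versus rows $\{3,4\}$, each complementary pair of $2\times 2$ minors is of the form $\sin\!\big(\tfrac{2\pi}{q}(j-i)\big)$-type (for the top block) times $\sin\!\big(\tfrac{2\pi p}{q}(l-k)\big)$-type (for the bottom block). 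After collecting terms one should arrive at a factored trigonometric expression — I expect something of the shape
$$ D = \pm\, 8 \sin\!\Big(\tfrac{\pi a}{q}\Big)\sin\!\Big(\tfrac{\pi b}{q}\Big)\sin\!\Big(\tfrac{\pi(a+b)}{q}\Big)\cdot \sin\!\Big(\tfrac{\pi pa}{q}\Big)\sin\!\Big(\tfrac{\pi pb}{q}\Big)\sin\!\Big(\tfrac{\pi p(a+b)}{q}\Big) \cdot(\cdots) $$
times some factor depending on the ``cross'' differences $pa-\alpha q$ etc., or possibly a cleaner product; the precise identity I would pin down by the Laplace expansion and standard product-to-sum manipulations.

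Once $D$ is in factored form, nonvanishing reduces to checking that none of the arguments is a multiple of $\pi$, i.e.\ that none of $a, b, a+b, pa, pb, p(a+b)$ is divisible by $q$ — plus possibly that a certain difference factor is nonzero. Here is where the arithmetic of Section~\ref{sec:farey} enters: $a, b \in \llbracket 1, q-1\rrbracket$ are coprime to $q$ is not automatic, but $a+b = x \le q-1$ by Proposition~\ref{prop:abab} (since $x = X\wedge\infty$ with $X\in[0,1]$ a proper fraction, so $x\le q$, and $x=q$ would force $X = Q$, excluded as $A,B$ are not both Farey neighbors of $Q$... one must be slightly careful and use that $X$ is the common neighbor closer to $Q$). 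For the factors $pa, pb, p(a+b)$: since $\gcd(p,q)=1$, $q \mid pa \iff q\mid a$, and $a\in\llbracket 1,q-1\rrbracket$ so this never happens; similarly for $b$; and $q\mid p(a+b) \iff q\mid (a+b) = x$, already handled. \textbf{The main obstacle} I anticipate is purely bookkeeping: carrying out the Laplace expansion without error and massaging the resulting sum of six or so trigonometric products into a recognizable factored form — the algebra is routine but bulky, and there is real risk of a sign or a stray factor. A possible shortcut worth trying first: since the four points lie on $\iota(\mathbb{T})$, degeneracy of $M$ would mean they lie in a common hyperplane through the origin; but by Proposition~\ref{prop:basis} the lifts $0, u, v, u+v$ form a fundamental parallelogram contained in a $(-\pi,\pi)^2$ square, and a direct geometric argument (the four points $\iota(0),\iota(u),\iota(u+v),\iota(v)$ cannot be coplanar because... ) might sidestep the determinant computation entirely — though I suspect the authors simply grind the determinant, so I would present the Laplace-expansion route as the primary plan.
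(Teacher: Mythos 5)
Your opening move (the generalized Laplace expansion along rows $\{1,2\}$ vs.\ $\{3,4\}$, exploiting the block structure) is exactly what the paper does: after a rotation in each coordinate plane and column operations, $D$ factors as $4$ times a product of two $2\times2$ determinants. But your prediction of where this lands is wrong in a way that hides the entire difficulty. The curve $\theta\mapsto(\cos\theta,\sin\theta,\cos p\theta,\sin p\theta)$ is \emph{not} the trigonometric moment curve (that would be $p=2$), and $D$ does \emph{not} reduce to a product of individual sines. Each of the two $2\times2$ factors is a \emph{difference} of products, e.g.
$$\cos\tfrac{a+b}{q}\pi\,\cos\tfrac{a-b}{q}p\pi-\cos\tfrac{a-b}{q}\pi\,\cos\tfrac{a+b}{q}p\pi,$$
equivalently (after dividing by the cosines) the two non-identities
$\tan\frac{ap\pi}{q}\tan\frac{bp\pi}{q}\neq\tan\frac{a\pi}{q}\tan\frac{b\pi}{q}$ and
$\tan\frac{a\pi}{q}\tan\frac{bp\pi}{q}\neq\tan\frac{b\pi}{q}\tan\frac{ap\pi}{q}$.
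Your plan disposes of the nonvanishing by checking that $q$ divides none of $a,b,a+b,pa,pb,p(a+b)$, and relegates the rest to ``possibly that a certain difference factor is nonzero'' and ``purely bookkeeping.'' That unexamined difference factor is the whole proposition.

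Concretely, what is needed to kill those two differences is the Farey arithmetic of Section~\ref{sec:farey}: since $ap-\alpha q=\pm a'$ and $bp-\beta q=\mp b'$ with \emph{opposite} signs ($Q$ lies between $A$ and $B$), one has $\tan\frac{ap\pi}{q}=\pm\tan\frac{a'\pi}{q}$ and $\tan\frac{bp\pi}{q}=\mp\tan\frac{b'\pi}{q}$, turning both non-identities into statements of the form ``positive $\neq$ negative'' --- \emph{provided} all of $a,a',b,b'$ lie in $(0,\frac q2)$. Proposition~\ref{prop:notq2} (none equals $\frac q2$, which already uses the hypothesis that $A,B$ are not both Farey neighbors of $Q$ nor both of $\infty$) keeps the tangents finite, but one of $a,a',b,b'$ may exceed $\frac q2$ and flip a sign; the paper then needs the identity $a'b+b'a=q$ to pin down that case ($a'=1$, $b=q-ab'$, $a,b'\geq2$) and a genuine monotonicity argument ($u\mapsto\tan u/\tan(u/a)$ increasing on $(0,\frac\pi2)$, via concavity of $\sin$) to conclude. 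None of this is bookkeeping, and none of it is reachable from your stated plan; your fallback ``geometric shortcut'' via Proposition~\ref{prop:basis} is also a non-starter, since that proposition lives on the torus $\mathbb{T}$ and says nothing about affine independence of the images in $\mathbb{R}^4$.
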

\begin{proof}
Rotating the plane of the first two coordinates by $\frac{-a-b}{q}\pi$, and the plane of the last two coordinates by $\frac{-a-b}{q}p\pi$, we see that
\begin{eqnarray*} D&=&\left | \begin{array}{llll}
\cos\frac{-a-b}{q}\pi & \cos\frac{a-b}{q}\pi & 
\cos\frac{ b-a}{q}\pi & \cos\frac{a+b}{q}\pi  \\ 
\sin\frac{-a-b}{q}\pi & \sin\frac{a-b}{q}\pi & 
\sin\frac{ b-a}{q}\pi & \sin\frac{a+b}{q}\pi  \\ 
\cos\frac{-a-b}{q}p\pi & \cos\frac{a-b}{q}p\pi & 
\cos\frac{ b-a}{q}p\pi & \cos\frac{a+b}{q}p\pi  \\ 
\sin\frac{-a-b}{q}p\pi & \sin\frac{a-b}{q}p\pi & 
\sin\frac{ b-a}{q}p\pi & \sin\frac{a+b}{q}p\pi  
\end{array} \right | \\ &=& 4 \left | \begin{array}{llll}
\cos\frac{a+b}{q}\pi & \cos\frac{a-b}{q}\pi & 
\cos\frac{b-a}{q}\pi & \cos\frac{a+b}{q}\pi  \\ 0 & 0 & 
\sin\frac{b-a}{q}\pi & \sin\frac{a+b}{q}\pi  \\ 
\cos\frac{a+b}{q}p\pi & \cos\frac{a-b}{q}p\pi & 
\cos\frac{b-a}{q}p\pi & \cos\frac{a+b}{q}p\pi  \\ 0 & 0 & 
\sin\frac{b-a}{q}p\pi & \sin\frac{a+b}{q}p\pi  
\end{array} \right | \hspace{6pt} \text{(column operations)}\\
&=& 4 \left | \begin{array}{ll}
\cos\frac{a+b}{q} \pi & \cos\frac{a-b}{q} \pi \\
\cos\frac{a+b}{q}p\pi & \cos\frac{a-b}{q}p\pi  
\end{array} \right | \cdot \left | \begin{array}{ll}
\sin\frac{a-b}{q} \pi & \sin\frac{a+b}{q} \pi \\
\sin\frac{a-b}{q}p\pi & \sin\frac{a+b}{q}p\pi  
\end{array} \right | \\ &=& \textstyle{4~(2 
\cos\frac{a}{q} \pi \cos\frac{b}{q} \pi \cdot
\sin\frac{ap}{q}\pi \sin\frac{bp}{q}\pi - 2 
\sin\frac{a}{q} \pi \sin\frac{b}{q} \pi \cdot 
\cos\frac{ap}{q}\pi \cos\frac{bp}{q}\pi)} \\ 
&& \textstyle{~(2
\sin\frac{a}{q} \pi \cos\frac{b}{q} \pi \cdot
\sin\frac{bp}{q}\pi \cos\frac{ap}{q}\pi - 2 
\sin\frac{b}{q} \pi \cos\frac{a}{q} \pi \cdot 
\sin\frac{ap}{q}\pi \cos\frac{bp}{q}\pi)}~,
\end{eqnarray*}
so we only need to prove
$$\tan\frac{ap\pi}{q}\tan\frac{bp\pi}{q} \neq
\tan\frac{a\pi}{q} \tan\frac{b\pi}{q} \hspace{10pt} ; \hspace{10pt}
\tan\frac{a\pi}{q} \tan\frac{bp\pi}{q} \neq
\tan\frac{b\pi}{q} \tan\frac{ap\pi}{q}$$
(provided all these tangents are finite).
Since $ap-\alpha q=a'\cdot \sigma(Q-A)$ (where $\sigma$ is the sign function) and $\tan$ is $\pi$-periodic,
$$\tan \frac{ap\pi}{q}=\tan \frac{ap-\alpha q}{q}\pi=\sigma(Q-A)\tan \frac{a'}{q}\pi$$
and similarly $\tan \frac{bp\pi}{q}=\sigma(Q-B)\tan \frac{b'}{q}\pi$. Since $Q$ lies between $A$ and $B$, the signs of $Q-A$ and $Q-B$ are opposite, so we only need to prove 
\begin{equation} \label{eq:ineq}
\tan\frac{a'\pi}{q}\tan\frac{b'\pi}{q} \neq
-\tan\frac{a\pi}{q} \tan\frac{b\pi}{q} \hspace{10pt} ; \hspace{10pt}
\tan\frac{a\pi}{q} \tan\frac{b'\pi}{q} \neq
-\tan\frac{b\pi}{q} \tan\frac{a'\pi}{q}~.
\end{equation}
(All these tangents \emph{are} finite, by Proposition \ref{prop:notq2}.)
If $a,a',b,b'\leq \frac{q}{2}$, then all the values of ``$\text{tan}$'' in (\ref{eq:ineq}) are positive, which yields the result.

If one of $a,a',b,b'$ is larger than $\frac{q}{2}$, say $b>\frac{q}{2}$, then $a'b+b'a=q$ requires $a'=1$, which entails $a\geq 2$ (because $A$ is not a Farey neighbor of both $Q$ and $\infty$), and $b'\geq 2$ (because $A$ and $B$ are not both Farey neighbors of $Q$). We have $ab'=q-b<\frac{q}{2}$ and $b=\frac{q-ab'}{a'}=q-ab'$.
Therefore the first inequality of (\ref{eq:ineq}) can be written
$$\tan\frac{\pi}{q} \tan\frac{b'\pi}{q} \neq \tan\frac{a\pi}{q} \tan\frac{ab'\pi}{q}~,$$
which is clearly true (both members are positive, but the right one is larger, factor-wise, because $a\geq 2$).

Similarly, the second inequality of (\ref{eq:ineq}) becomes
$\tan\frac{a\pi}{q} \tan\frac{b'\pi}{q} \neq
\tan\frac{ab'\pi}{q} \tan\frac{\pi}{q}$ (all values of ``$\tan$'' are still positive), i.e.
$$\frac{\tan\frac{a\pi}{q}}{\tan\frac{\pi}{q}}  \neq
\frac{\tan\frac{ab'\pi}{q}}{\tan\frac{b'\pi}{q}}~.$$
Notice that without the ``$\tan$'s'', this would be an identity. To see that the right member is larger, it is therefore enough to make sure that the function 
$g:u\mapsto \frac{\tan u}{\tan (u/a)}$ is increasing on $(0,\frac{\pi}{2})$. Computation yields $$g'(u)=\frac{\sin (2 u/a)-\sin (2u)/a}{2\sin^2 (u/a)\cos^2 u}~:$$ since $a\geq 2$, the numerator is clearly positive, by strict concavity of $\sin$ on $[0,\pi]$.

If instead of $b$ another term of $a,a',b,b'$ is larger than $\frac{q}{2}$, then we can apply the same argument, up to permuting $a,a',b,b'$.
\end{proof}

\subsection{Candidate faces are faces of the convex hull}

We must now show that if $\rho:\mathbb{R}^4\rightarrow \mathbb{R}$ is some linear form that takes the same value $Z>0$ 
on each column vector $x_0,x_a,x_b,x_{a+b}$ (i.e.~$\iota(\tau_0),\iota(\tau_a),\iota(\tau_b),\iota(\tau_{a+b})$) of the matrix $M$ from (\ref{eq:matrixm}), then  
$\rho\circ\iota(\tau_k)<Z$ 
for any $k\in \llbracket 0,q-1\rrbracket \smallsetminus \{0,a,b,a+b\}$. This will be done by showing \emph{via} Corollary \ref{cor:patate} that $(\rho\circ\iota)^{-1}[Z,+\infty)$ is (once lifted to $\mathbb{R}^2$) a convex region of the type seen in Proposition \ref{prop:convexmiss}. 

An elementary computation shows that in coordinates, 
\begin{equation} \label{eq:formvalue}\left \{ \begin{array}{rcl}
\rho&=& (-1)^{\alpha+\beta} \left ( \begin{array}{r} 
-\cos \frac{a+b}{q} \pi \sin \frac{ap \pi}{q} \sin \frac{bp \pi}{q} \\
-\sin \frac{a+b}{q} \pi \sin \frac{ap \pi}{q} \sin \frac{bp \pi}{q} \\
\cos \frac{a+b}{q}p \pi \sin \frac{a \pi}{q} \sin \frac{b \pi}{q} \\
\sin \frac{a+b}{q}p \pi \sin \frac{a \pi}{q} \sin \frac{b \pi}{q}
\end{array} \right )^t 
=:\left (\begin{array}{l} U\\ U'\\ V\\ V'\end{array}\right )^t\\ &&\\ Z&=& 
(-1)^{\alpha+\beta} \left ( \cos \frac{a+b}{q}p \pi \sin \frac{a \pi}{q} \sin \frac{b \pi}{q} - \cos \frac{a+b}{q} \pi \sin \frac{ap \pi}{q} \sin \frac{bp \pi}{q} \right ) \\
&=& \frac{(-1)^{\alpha+\beta}}{2} \left ( \cos \frac{a+b}{q}p \pi \cos \frac{a-b}{q} \pi - \cos \frac{a+b}{q} \pi \cos \frac{a-b}{q}p \pi \right )\end{array} \right .\end{equation}
will do ($Z$ will turn out to be positive by Claim \ref{cla:key} below; so far we only know $Z\neq 0$ by Proposition \ref{prop:invertible}). The notation $U,U',V,V'$ is made to fit Lemma \ref{lem:convex}. Define
$$\left \{ \begin{array}{rclcl}
U''&:=&\sqrt{U^2+U'^2}&=& |\sin \frac{ap \pi}{q} \sin \frac{bp \pi}{q}|>0 \\
V''&:=&\sqrt{V^2+V'^2}&=& |\sin \frac{a \pi}{q} \sin \frac{b \pi}{q}|>0~.
\end{array}\right .$$

\begin{claim} \label{cla:key}
The point $c$ of Definition \ref{def:center} is the absolute maximum of $\rho \circ \iota$ on the torus $\mathbb{T}$. Moreover, $$Z=\cos \frac{x'}{q}\pi\cos\frac{y}{q}\pi - \cos \frac{x}{q}\pi \cos \frac{y'}{q}\pi~,$$ $Z$ is positive, and one has: $|V''-U''|<Z<V''+U''$.
\end{claim}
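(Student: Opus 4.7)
The plan is to establish the four assertions in the following order: first, that $c$ is the absolute maximum of $\rho\circ\iota$ on $\mathbb{T}$; next, the closed-form expression for $Z$; then the upper bound $Z<V''+U''$; and finally the lower bound $|V''-U''|<Z$ (from which positivity of $Z$ is automatic, since $|V''-U''|\geq 0$). The first three steps are essentially bookkeeping with the arithmetic of Section~\ref{sec:farey}; the final inequality is the crux.

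For the maximum characterization, observe that $\rho\circ\iota(u,v)=(U\cos u+U'\sin u)+(V\cos v+V'\sin v)$ decouples into two independent pure sinusoids of amplitudes $U''$ and $V''$, so its unique absolute maximum on $\mathbb{T}$ is $U''+V''$, attained at $(u_0,v_0)=(\arg(U+iU'),\,\arg(V+iV'))$. To pin down these phases, the key arithmetic input is that $ap-\alpha q=\sigma(Q-A)\,a'$ (already invoked in Proposition~\ref{prop:invertible}), which gives $\sin\frac{ap\pi}{q}=(-1)^{\alpha}\sigma(Q-A)\sin\frac{a'\pi}{q}$, with $\sin\frac{a'\pi}{q}>0$ by Proposition~\ref{prop:notq2}. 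Multiplying the analogous identity for $b$ and using $\sigma(Q-A)\sigma(Q-B)=-1$, the sign of $\sin\frac{ap\pi}{q}\sin\frac{bp\pi}{q}$ comes out to $-(-1)^{\alpha+\beta}$, which cancels exactly the prefactor of $\rho$; a short calculation then yields $u_0=\frac{x\pi}{q}$ and $v_0=\frac{xp\pi}{q}-(\alpha+\beta)\pi$, matching Definition~\ref{def:center}.

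Evaluating at $\tau_0=(0,0)$ and using the identity $\cos\frac{xp\pi}{q}=(-1)^{\alpha+\beta}\cos\frac{x'\pi}{q}$ (coming from $xp-(\alpha+\beta)q=\pm x'$ via Proposition~\ref{prop:abab}, and likewise with $y'$ replacing $x'$), one obtains $Z=U''\cos\frac{x\pi}{q}+V''\cos\frac{x'\pi}{q}$. Inserting the product-to-sum identities $2U''=\cos\frac{x'\pi}{q}-\cos\frac{y'\pi}{q}$ and $2V''=\cos\frac{y\pi}{q}-\cos\frac{x\pi}{q}$ collapses the $\cos\frac{x\pi}{q}\cos\frac{x'\pi}{q}$ cross-terms and yields the stated expression for $Z$. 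The upper bound $Z<U''+V''$ is then immediate: $x,x'\in(0,q)$ strictly (by Proposition~\ref{prop:basis} and the setup of Remark~\ref{rem:order}), so $|\cos u_0|,|\cos v_0|<1$.

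The main obstacle is the lower bound $|V''-U''|<Z$. Writing $Z=U''\cos u_0+V''\cos v_0$ and applying the half-angle formulas, and assuming WLOG $V''\geq U''$, this reduces to the pair of trigonometric inequalities $U''\cos^2\frac{x\pi}{2q}>V''\sin^2\frac{x'\pi}{2q}$ and (symmetrically) $V''\cos^2\frac{x'\pi}{2q}>U''\sin^2\frac{x\pi}{2q}$. Substituting $U''=\sin\frac{a'\pi}{q}\sin\frac{b'\pi}{q}$ and $V''=\sin\frac{a\pi}{q}\sin\frac{b\pi}{q}$ and expanding $\cos\frac{(a+b)\pi}{2q}$, $\sin\frac{(a'-b')\pi}{2q}$ via the addition formulas turns each side into a polynomial in $\sin\frac{a\pi}{2q},\cos\frac{a\pi}{2q}$ and the analogous quantities for $b,a',b'$. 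The essential arithmetic ingredient at this stage is $a'b+b'a=q$ from Proposition~\ref{prop:abab}, which pairs dominant with subdominant monomials after factoring; combined with monotonicity of $\sin$ on $[0,\pi/2]$ this should close the argument. A separate case treatment will be required when one of $a,a',b,b'$ exceeds $\frac{q}{2}$ (permitted by Proposition~\ref{prop:notq2}), in the spirit of the end of Proposition~\ref{prop:invertible}: applying $\sin\frac{\lambda\pi}{q}=\sin\frac{(q-\lambda)\pi}{q}$ reduces the problem to the regime where all relevant half-angles lie in $(0,\pi/4)$, where cosines are positive and sines monotone.
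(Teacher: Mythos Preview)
Your treatment of the first three assertions (the maximum at $c$, the closed form for $Z$, and the upper bound) is correct and matches the paper's computations closely. In particular, your expression $Z=U''\cos\frac{x\pi}{q}+V''\cos\frac{x'\pi}{q}$ and the product-to-sum identities $2U''=\cos\frac{x'\pi}{q}-\cos\frac{y'\pi}{q}$, $2V''=\cos\frac{y\pi}{q}-\cos\frac{x\pi}{q}$ are exactly what the paper uses (implicitly). Your reduction of the lower bound to the pair
\[
U''\cos^2\tfrac{x\pi}{2q}>V''\sin^2\tfrac{x'\pi}{2q}
\quad\text{and}\quad
V''\cos^2\tfrac{x'\pi}{2q}>U''\sin^2\tfrac{x\pi}{2q}
\]
is also correct; in fact, substituting $U''=\sin^2\frac{y'\pi}{2q}-\sin^2\frac{x'\pi}{2q}$ and $V''=\sin^2\frac{x\pi}{2q}-\sin^2\frac{y\pi}{2q}$ and simplifying, each of these collapses \emph{exactly} to the paper's inequalities~(\ref{eq:sineratio})-$(i)$ and~(\ref{eq:sineratio})-$(ii)$.

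The gap is in how you propose to finish. Your plan---expand $\cos\frac{(a+b)\pi}{2q}$ and $\sin\frac{(a'-b')\pi}{2q}$ by the addition formulas, obtain a polynomial in half-angle sines and cosines of $a,b,a',b'$, and then invoke $a'b+b'a=q$ to ``pair dominant with subdominant monomials''---is not carried out, and I do not see how to make it work: the constraint $a'b+b'a=q$ is linear in $a,b,a',b'$ while the target inequality is a transcendental one in their half-angles, and no cancellation or factorization presents itself after expansion. The paper does \emph{not} expand further; instead it takes square roots to rewrite each inequality as a comparison of sine ratios,
\[
\frac{\sin\frac{x'}{q}\frac{\pi}{2}}{\sin\frac{y'}{q}\frac{\pi}{2}}<\frac{\sin\frac{q-x}{q}\frac{\pi}{2}}{\sin\frac{q-y}{q}\frac{\pi}{2}},
\]
and then proves a dedicated monotonicity lemma (Proposition~\ref{prop:sines}): if $0<s<t<\frac{\pi}{2}$, $0<s'<t'<\frac{\pi}{2}$, $s<s'$ and $\frac{s}{t}\le\frac{s'}{t'}$, then $\frac{\sin s}{\sin t}<\frac{\sin s'}{\sin t'}$. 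Applying this lemma reduces everything to two purely arithmetic checks, $x'<q-x$ and $\frac{x'}{y'}\le\frac{q-x}{q-y}$, which \emph{are} settled directly from $a'b+b'a=q$ (the second via the pleasant observation that the relevant difference has the form $u+|u|\ge 0$). This sine-ratio lemma is the missing idea in your proposal.

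Finally, your anticipated case-split for $a,a',b,b'>\frac{q}{2}$ is unnecessary here: the paper's argument for the lower bound works uniformly, with no such split (the case analysis you recall from Proposition~\ref{prop:invertible} is specific to the non-degeneracy proof). Your suggested reduction $\sin\frac{\lambda\pi}{q}=\sin\frac{(q-\lambda)\pi}{q}$ would in any case not preserve the quantities $x=a+b$ and $x'=|a'-b'|$ appearing on the other side of the inequality.
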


This claim proves Theorem \ref{thm:main}. Indeed, assume the claim, and let $H$ denote $[Z,+\infty)$. Let $\overline{\pi}$ denote the natural projection $\mathbb{R}^2\rightarrow \mathbb{T}$. By Corollary \ref{cor:patate}, the level curve $(\rho\circ\iota\circ \overline{\pi})^{-1}(Z)\subset \mathbb{R}^2$ contains a striclty convex closed curve $\gamma$ centered around $\overline{c}$, contained in the square $C:=\overline{c}+(-\pi,\pi)^2$ and passing through the representatives of $\tau_0,\tau_a,\tau_b,\tau_{a+b}$ contained in $C$. By Proposition \ref{prop:basis}, these representatives are the vertices $0,u,v,u+v$ of the fundamental parallelogram $D$. Corollary \ref{cor:patate} and Proposition \ref{prop:convexmiss} then yield the result: $(\rho\circ\iota)^{-1}(H)$ contains no other points $\tau_k$ than $\tau_0,\tau_a,\tau_b,\tau_{a+b}$.

\begin{proof} (Claim \ref{cla:key}). The maximum of $\rho \circ \iota$ on $\mathbb{T}$ is clearly $U''+V''$. Since $$\iota(c)=\left ( \begin{array}{c} \cos \frac{a+b}{q}\pi \\ \sin \frac{a+b}{q}\pi \\ \cos [p\frac{a+b}{q} - (\alpha+\beta)]\pi \\ \sin [p\frac{a+b}{q} - (\alpha+\beta)]\pi \end{array} \right )~,$$ we can compute 
\begin{eqnarray*} \rho\circ\iota(c)&=&(-1)^{\alpha+\beta}\left (-\sin\frac{ap\pi}{q}\pi\sin\frac{bp\pi}{q}\pi+(-1)^{\alpha+\beta}\sin\frac{a\pi}{q}\sin\frac{b\pi}{q} \right ) \\ &=& -\sin\frac{ap-\alpha q}{q}\pi\sin\frac{bp-\beta q}{q}\pi + \sin\frac{a}{q}\pi\sin\frac{b}{q}\pi \\ 
&=& \sin\frac{A\wedge Q}{q}\pi\sin\frac{B\wedge Q}{q}\pi + \sin\frac{a}{q}\pi\sin\frac{b}{q}\pi \\
&=& \sin\frac{a'}{q}\pi\sin\frac{b'}{q}\pi + \sin\frac{a}{q}\pi\sin\frac{b}{q}\pi \end{eqnarray*}
because $ap-\alpha q$ and $bp-\beta q$ have opposite signs ($Q$ lies between $A$ and $B$). Both terms in the last expression are positive since $a,a',b,b'\in \llbracket 1,q-1 \rrbracket$. In fact, since $$V''= \left | \sin \frac{a \pi}{q} \sin \frac{b \pi}{q}\right |=\sin \frac{a \pi}{q} \sin \frac{b \pi}{q}$$ and $$U''= \left |\sin \frac{a p\pi}{q} \sin \frac{bp \pi}{q}\right |=\left |\sin \frac{a p-\alpha q}{q}\pi \sin \frac{bp -\beta q}{q}\pi\right |=\sin \frac{a' \pi}{q} \sin \frac{b' \pi}{q}~,$$ we have shown that $\rho\circ \iota(c)=U''+V''$, the absolute maximum of $\rho\circ\iota$.

The computation of $Z$ follows similar lines: in the second expression for $Z$ in (\ref{eq:formvalue}), notice that the first and last cosines can be written $$(-1)^{\alpha+\beta}\cos\frac{a+b}{q}p\pi=\cos\frac{(a+b)p-(\alpha+\beta)q}{\pi}=\cos \frac{X\wedge Q}{q}\pi~;$$ $$(-1)^{\alpha-\beta}\cos\frac{a-b}{q}p\pi=\cos\frac{(a-b)p-(\alpha-\beta)q}{\pi}=\cos \frac{Y\wedge Q}{q}\pi$$ (using Proposition \ref{prop:abab}). Together with $\frac{a+b}{q}=\frac{x}{q}$ and $\frac{a-b}{q}=\frac{\pm y}{q}$, this yields the desired expression of $Z=\cos \frac{x'}{q}\pi\cos\frac{y}{q}\pi - \cos \frac{x}{q}\pi \cos \frac{y'}{q}\pi$.

The upper bound on $Z$ is obvious from the first expression of $Z$ in (\ref{eq:formvalue}). We now focus on the lower bound (which will also imply $Z>0$), i.e.~we aim to show
\begin{equation} \label{eq:minoration} 
\cos\frac{x'}{q}\pi \cdot \cos\frac{y}{q}\pi -
\cos\frac{x}{q}\pi  \cdot \cos\frac{y'}{q}\pi > 2\left | 
\sin\frac{a'}{q}\pi \cdot \sin\frac{b'}{q}\pi - 
\sin\frac{a}{q}\pi  \cdot \sin\frac{b}{q}\pi \right |~. 
\end{equation}
By Proposition \ref{prop:abab}, the right member of (\ref{eq:minoration}) can be written
$$\left | \left (
\cos\frac{x'}{q}\pi-
\cos\frac{y'}{q}\pi\right )-\left (
\cos\frac{y}{q}\pi-
\cos\frac{x}{q}\pi\right )\right |~;$$
therefore we are down to proving the two identities

$$\left \{ \begin{array}{l}\displaystyle{
\left (\cos \frac{x'}{q}\pi-1 \right )\cdot 
\left (\cos \frac{y}{q}\pi +1 \right )~>~
\left (\cos \frac{x}{q}\pi +1 \right )\cdot 
\left (\cos \frac{y'}{q}\pi-1 \right )} \\ \\ \displaystyle{
\left (\cos \frac{x'}{q}\pi+1 \right )\cdot 
\left (\cos \frac{y}{q}\pi -1 \right )~>~
\left (\cos \frac{x}{q}\pi -1 \right )\cdot 
\left (\cos \frac{y'}{q}\pi+1 \right )~.}
\end{array} \right . $$

\noindent Using $\cos t +1=2\cos^2\frac{t}{2}$ and $\cos t - 1=-2\sin^2\frac{t}{2}$, this in turn amounts to

$$\left \{\begin{array}{l} \displaystyle{
\sin \left (\frac{x'}{q}\right )\frac{\pi}{2}\cdot
\cos \left (\frac{y}{q}\right )\frac{\pi}{2}~<~
\cos \left (\frac{x}{q}\right )\frac{\pi}{2}\cdot 
\sin \left (\frac{y'}{q}\right )\frac{\pi}{2}} \\ \\ \displaystyle{ 
\cos \left (\frac{x'}{q}\right )\frac{\pi}{2}\cdot
\sin \left (\frac{y}{q}\right )\frac{\pi}{2}~<~
\sin \left (\frac{x}{q}\right )\frac{\pi}{2}\cdot 
\cos \left (\frac{y'}{q}\right )\frac{\pi}{2}~,}
\end{array} \right .$$

\noindent or equivalently
\begin{equation}\label{eq:sineratio}\left \{\begin{array}{rcll}
\frac{\displaystyle{\sin \left (\frac{x'}{q}\right )\frac{\pi}{2}}}
{\displaystyle{\sin \left (\frac{y'}{q}\right )\frac{\pi}{2}}} &<&
\frac{\displaystyle{\sin \left (\frac{q - x}{q}\right )\frac{\pi}{2}}}
{\displaystyle{\sin \left (\frac{q - y}{q}\right )\frac{\pi}{2}}} 
& \hspace{20pt} (i) \\  \\ 
\frac{\displaystyle{\sin \left (\frac{y}{q}\right )\frac{\pi}{2}}}
{\displaystyle{\sin \left (\frac{x}{q}\right )\frac{\pi}{2}}} &<&  
\frac{\displaystyle{\sin \left (\frac{q-y'}{q}\right )\frac{\pi}{2}}}
{\displaystyle{\sin \left (\frac{q-x'}{q}\right )\frac{\pi}{2}}}
& \hspace{20pt} (ii). \end{array} \right .\end{equation}
To prove (\ref{eq:sineratio})-$(i)$ and (\ref{eq:sineratio})-$(ii)$, we will use
\begin{proposition} \label{prop:sines}
If $\displaystyle{0<s<t<\frac{\pi}{2}}$ and $\displaystyle{0<s'<t'<\frac{\pi}{2}}$ satisfy $s<s'$ and $\displaystyle{\frac{s}{t}\leq\frac{s'}{t'}}$, then $\displaystyle{\frac{\sin s}{\sin t}<\frac{\sin s'}{\sin t'}}$.
\end{proposition}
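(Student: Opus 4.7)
First, I would dispose of the easy case $t'\leq t$: since $\sin$ is strictly increasing on $(0,\pi/2)$, the hypothesis $s<s'$ gives $\sin s<\sin s'$, while $t\geq t'$ gives $\sin t\geq \sin t'$, so $\sin s/\sin t<\sin s'/\sin t'$ follows immediately.

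The substantive case is $t<t'$. Here the plan is to parametrize the segment in $(s,t)$-space between $(s,t)$ and $(s',t')$ by $s_c:=s+c(s'-s)$ and $t_c:=t+c(t'-t)$ for $c\in[0,1]$, and to show that $h(c):=\sin(s_c)/\sin(t_c)$ is strictly increasing on $[0,1]$. A one-line differentiation reduces the condition $h'(c)>0$ to the single inequality
$$\frac{\tan t_c}{\tan s_c} \;>\; \frac{t'-t}{s'-s},$$
which is what I would actually prove.

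I would bound the two sides separately. For the left side, the strict convexity of $\tan$ on $(0,\pi/2)$ together with $\tan 0=0$ implies that $x\mapsto \tan(x)/x$ is strictly increasing, so from $s_c<t_c$ I get $\tan(t_c)/\tan(s_c) > t_c/s_c$. For the right side, the hypothesis $s/t\leq s'/t'$ rewrites as $st'\leq s't$, from which a short manipulation gives $(t'-t)/(s'-s)\leq t'/s'$. Finally, the fractional-linear function $c\mapsto t_c/s_c$ has derivative proportional to $t's-ts'\leq 0$, so it is non-increasing on $[0,1]$ and bounded below by its value $t'/s'$ at $c=1$. Chaining the three comparisons,
$$\frac{\tan t_c}{\tan s_c}\;>\;\frac{t_c}{s_c}\;\geq\;\frac{t'}{s'}\;\geq\;\frac{t'-t}{s'-s},$$
gives $h'(c)>0$ on $[0,1]$ and hence $h(0)<h(1)$, as desired.

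The main obstacle I expect is finding the correct \emph{path} between $(s,t)$ and $(s',t')$: a pure horizontal or vertical slide (decoupling $s$ from $t$) either fails to preserve the hypothesis $s/t\leq s'/t'$ or leaves the intermediate point outside $(0,\pi/2)^2$. Linear interpolation works precisely because it allows both hypotheses ($s<s'$ and $st'\leq s't$) to be repackaged into clean monotonicity statements (positivity of the sine-derivative coefficient, and monotonicity of the ratio $t_c/s_c$) that hold uniformly along the whole segment.
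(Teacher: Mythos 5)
Your proof is correct, but it takes a genuinely different route from the paper's. The paper first \emph{decreases} $t$ so that $s/t = s'/t'$ (this only raises the left-hand side, so the reduction is harmless), then writes the common ratio as $\frac{1-\lambda}{1+\lambda}$ and shows in one differentiation that $u\mapsto \frac{\sin(1-\lambda)u}{\sin(1+\lambda)u}$ is increasing on the relevant interval, the key point being strict concavity of $\sin$ on $[0,\pi]$ (which gives $\sin(2\lambda u)>\lambda\sin(2u)$). Thus the paper moves along a single ray through the origin of $(s,t)$-space. You instead move along the chord from $(s,t)$ to $(s',t')$, after splitting off the trivial case $t'\leq t$. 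Your key ingredient is the monotonicity of $x\mapsto \tan(x)/x$ (i.e., convexity of $\tan$), combined with two small algebraic observations that the hypothesis $st'\leq s't$ makes $c\mapsto t_c/s_c$ non-increasing and bounds $\frac{t'-t}{s'-s}$ by $\frac{t'}{s'}$. Both arguments ultimately lean on a convexity/concavity fact for a trigonometric function; the paper's is shorter because the ray-reduction packages both hypotheses at once, whereas your chord-parametrization needs a three-term chain of inequalities but has the virtue of not requiring any preliminary normalization of the ratio. Your reasoning checks out at each step (in particular $(t'-t)/(s'-s)\leq t'/s'$ and the sign of the derivative of $t_c/s_c$ are both equivalent to $st'\leq s't$), and the first inequality in the chain is strict because $s_c<t_c$ strictly, which is what delivers the strict conclusion.
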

\begin{proof}
Up to decreasing $t$, it is clearly enough to treat the case $\frac{s}{t}=\frac{s'}{t'}=\frac{1-\lambda}{1+\lambda}$ (where $0<\lambda<1$). The result then follows from the fact that $f(u)=\frac{\sin(1-\lambda)u}{\sin(1+\lambda)u}$ is increasing on $(0,\frac{\pi}{2(1+\lambda)}]$, which can be seen by computing
$$ f'(u)=\frac{\sin(2\lambda u)-\lambda\sin(2u)}{\sin^2(1+\lambda)u}~:$$
here the numerator is positive by strong concavity of $\sin$ on $[0,\frac{\pi}{1+\lambda}]$.
\end{proof}

We now prove (\ref{eq:sineratio})-$(i)$: by Proposition \ref{prop:sines}, it is enough to check
$$0<x'<y'< q\text{ and }0<y<x<q$$ (which are obvious from Proposition \ref{prop:abab}), plus
\begin{equation}\label{eq:enfin} 
x' < q-x 
~\text{ and }~ 
\frac{x'}{y'}\leq 
\frac{q-x}{q-y}~.
\end{equation}
The first inequality of (\ref{eq:enfin}) amounts, by Proposition \ref{prop:abab}, to
$$|a-b|+(a+b)<a'b+b'a$$
which can be written
$$(a'-1)(b\pm 1)+(b'-1)(a\mp 1)>0~.$$
If $a'$ and $b'$ are $>1$, then at least one of the products in the left member is positive, and we are done. If $a'=1$, then $b'>1$ (because $A,B$ are not both Farey neighbors of $Q$ in the assumptions of Theorem \ref{thm:main}) and $a>1$ (because $Q,\infty$ have no common Farey neighbors, i.e.~$p\notin\{1,q-1\}$) and we are also done. If $b'=1$, the argument is the same, exchanging $(A,a,a')$ and $(B,b,b')$.

The second inequality of (\ref{eq:enfin}) amounts to
$$q(y'-x')\geq y'x-x'y$$
which by Proposition \ref{prop:abab} can also be written
$$ y'-x' \geq \frac{(a'+b')(a+b)-|(a'-b')(a-b)|}{a'b+b'a}=:H~.$$
Here the left member is at least 2~: indeed, by Proposition \ref{prop:abab} it can be written $$a'+b'-|a'-b'|=2\inf\{a',b'\}~.$$
The right member $H$, however, is at most $2$~: indeed,
\begin{eqnarray*}2-H 
&=&\frac{a'(2b-a-b)+b'(2a-a-b)+|(a'-b')(a-b)|}{a'b+b'a} \\
&=& \frac{(a'-b')(b-a)+|(a'-b')(a-b)|}{a'b+b'a}
\end{eqnarray*}
and the numerator has the form $u+|u|\geq 0$. This finishes the proof of (\ref{eq:sineratio})-$(i)$.

The proof of (\ref{eq:sineratio})-$(ii)$ is identical with that of (\ref{eq:sineratio})-$(i)$, exchanging $(a,b,x,y)$ with $(a',b',y',x')$. Claim \ref{cla:key}, and therefore Theorem \ref{thm:main}, are proved. 
\end{proof}

\section{General finite subgroups of $\iota(\mathbb{T})\subset{\mathbb{C}^*}^2$}
\label{sec:general}
In this last section, let $\Gamma$ be \emph{any} finite subgroup of $\mathbb{T}=(\mathbb{S}^1)^2=(\mathbb{R}/2\pi\mathbb{Z})^2$. There exists a unique rational $Q=\frac{p}{q}\in[0,1)$ (here in reduced form) and a unique pair $(\mu,\nu)\in\mathbb{Z}_{>0}^2$ such that $\Gamma$ is the preimage of $\{\tau_k=(\frac{k}{q},\frac{kp}{q})\}_{0\leq k < q}$ under $$\begin{array}{rrcl} & \mathbb{T}& \longrightarrow & \mathbb{T} \\ \psi_{\mu\nu}~: & (s,t) & \mapsto & (\mu s,\nu t)~. \end{array}$$
Indeed, $\mu$ (resp. $\nu$) is just the cardinality of $\Gamma\cap(\mathbb{S}^1\times\{0\})$ (resp. $\Gamma\cap (\{0\}\times\mathbb{S}^1)$); the order of $\Gamma$ is $q\mu\nu$. The case $\frac{p}{q}=0$ can be put aside: it corresponds to $\iota(\Gamma)\subset\mathbb{R}^4$ being (the vertices of) the Cartesian product of a regular $\mu$-gon with a regular $\nu$-gon (the 3-dimensional faces are then regular prisms; degeneracies occur if $\mu\leq 2$ or $\nu \leq 2$). The case $\mu=\nu=1$ was treated in the previous sections, including the discussion of degeneracies when $p\equiv 0 \text{ or } \pm 1~[\text{mod }q]$.

It is easy to see that if $\mu=1<\nu$ (resp. $\nu=1<\mu$) and $\frac{p}{q}=\frac{1}{2}$, then $\iota(\Gamma)$ is contained in a $3$-dimensional subspace of $\mathbb{R}^4$ --- in fact, $\iota(\Gamma)$ is the vertex set of an antiprism with $\nu$-gonal (resp. $\mu$-gonal) basis, which in turn degenerates to a tetrahedron when $\nu=2$ (resp. $\mu=2$). Therefore, we can make
\begin{assumption} \label{ass:qumunu}
Until the end of this section,
\begin{itemize}
\item at least one of the positive integers $\mu,\nu$ is larger than one;
\item the rational $\frac{p}{q}\in (0,1)$ is not $\frac{1}{2}$ when $\mu=1$ or $\nu=1$.
\end{itemize}
\end{assumption}
Then, we claim that faces of the convex hull of $\iota(\gamma)\subset\mathbb{R}^4$ come in three types:
\begin{enumerate}
\item If $A,B\in[0,1]$ are rationals satisfying the hypotheses of Theorem \ref{thm:main}, then there is a tetrahedron spanned by the images under $\iota:\mathbb{T}\rightarrow \mathbb{R}^4$ of 
$$\textstyle{
\left (\frac{0}{q\mu}, \frac{0}{q\nu} \right ), 
\left (\frac{a}{q\mu}2\pi, \frac{ap-\alpha q}{q\nu}2\pi \right),
\left (\frac{b}{q\mu}2\pi, \frac{bp-\beta q}{q\nu}2\pi \right),
\left (\frac{a+b}{q\mu}2\pi, \frac{(a+b)p-(\alpha+\beta) q}{q\nu}2\pi\right),}$$
which are clearly four points of $\Gamma=\psi_{\mu\nu}^{-1}\{\tau_1,\dots,\tau_q\}$. They form a parallelogram whose center is $c=\left ( \frac{a+b}{q\mu}\pi, \frac{(a+b)p-(\alpha+\beta) q}{q\nu}\pi \right)$.
\item If $\nu>1$, add an extra tetrahedron of the type above for the pair $\{A,B\}=\{\frac{0}{1},\frac{1}{1}\}$ (this was ruled out in Theorem \ref{thm:main} because $A,B$ were not allowed both to be Farey neighbors of $\infty=\frac{1}{0}$). Similarly, if $\mu>1$, add an extra tetrahedron of the type above for $\{A,B\}$ equal to the unique pair of Farey neighbors $\frac{\alpha}{a},\frac{\beta}{b}$ such that $\frac{\alpha+\beta}{a+b}=\frac{p}{q}$. (If $\frac{p}{q}=\frac{1}{2}$ and $\mu,\nu\geq 2$, these two ``extra'' tetrahedra are in fact the same one.)
\item If $\nu>1$, add an extra cell spanned by the $2\nu$ vertices images under $\iota$ of 
$$\textstyle{\left \{\left . \left (0,\frac{k}{\nu}2\pi\right )~\right |~0\leq k < \nu
\left \} \,
\cup \left \{\left . \left (\frac{1}{q\mu}2\pi,\frac{p+kq}{q\nu}2\pi \right )~\right |~0\leq k<\nu\right \}
\right . \right .
.}$$ 
If $\nu>2$, this cell is an antiprism with regular $\nu$-gonal basis; it degenerates to a tetrahedron when $\nu=2$. Similarly, if $\mu>1$, add an extra cell spanned by the $2\mu$ vertices images under $\iota$ of 
$$\textstyle{\left \{\left . \left (\frac{k}{\mu}2\pi,0\right )~\right |~0\leq k < \mu\right \}\cup\left \{\left . \left (\frac{p+kq}{q\mu}2\pi,\frac{1}{q\nu}2\pi \right )~\right |~0\leq k<\mu\right \}~.}$$
\end{enumerate} 

Actually, cells of type (3) degenerate to segments when $\mu,\nu=1$.

\begin{observation} \label{obs:whoisone}
Let $\{A,B\}\subset [0,1]$ be a pair of rationals describing a face of type (1) or (2), define $a,a',b,b'\in\mathbb{Z}_{>0}$ and $x,x',y,y'\in\mathbb{Z}_{\geq 0}$ in the usual way, and bear in mind Proposition \ref{prop:abab}. Then,
\begin{itemize}
\item having $a=b=1$ (i.e.~$y=0$, i.e.~$y'=q$) is only allowed if $\nu>1$; 
\item having $a'=b'=1$ (i.e.~$x'=0$, i.e.~$x=q$) is only allowed if $\mu>1$; \item Proposition \ref{prop:notq2} no longer holds: some of $a,a',b,b'$ may be equal to $\frac{q}{2}$.
\end{itemize}
\end{observation}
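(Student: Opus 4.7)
The plan is to unpack each bullet by returning to the classification of faces (1)--(3) above, and to use Proposition \ref{prop:abab} together with Assumption \ref{ass:qumunu}. No new geometric input is needed; the arguments are short and combinatorial.

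For the first bullet, I would observe that $a=b=1$ forces $A,B$ to be distinct integers in $[0,1]$, hence $\{A,B\}=\{0,1\}$, so both $A$ and $B$ are Farey neighbors of $\infty$. This is directly forbidden in type (1) by the hypotheses of Theorem \ref{thm:main}. Among type (2) faces, the first extra tetrahedron (with $\{A,B\}=\{0,1\}$) exists by construction only when $\nu>1$, which is exactly what we want. The second type (2) extra tetrahedron has $a'=b'=1$, so Proposition \ref{prop:abab} yields $a+b=a'b+b'a=q$; imposing $a=b=1$ on top then forces $q=2$, hence $Q=1/2$, at which point Assumption \ref{ass:qumunu} (which already supplies $\mu>1$ in this case) additionally forces $\nu>1$. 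The second bullet is obtained by the symmetry of Proposition \ref{prop:abab} that exchanges $(a,b,x,y) \leftrightarrow (a',b',y',x')$---corresponding to swapping $\infty$ and $Q$ in the Farey picture---and correspondingly exchanges the roles of $\nu$ and $\mu$.

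For the third bullet it suffices to give one explicit example. Take $q=6$, $p=1$ (so $Q=1/6$), $\mu=1$, $\nu=2$: this is allowed by Assumption \ref{ass:qumunu} since $Q\neq 1/2$. The pair $A=0/1$, $B=1/3$ consists of Farey neighbors with $Q$ strictly between them; only $A$ lies in $\{0,1\}$, and only $A$ is a Farey neighbor of $Q$. Hence $\{A,B\}$ satisfies the hypotheses of Theorem \ref{thm:main} and defines a type (1) face. Yet a direct computation yields $a=a'=1$ and $b=b'=3=q/2$. Inspecting the proof of Proposition \ref{prop:notq2}, the implication ``$b=q/2 \Rightarrow a=a'=1$'' still goes through, but the subsequent contradiction (which relied on $p \in \llbracket 2, q-2 \rrbracket$) no longer applies because Assumption \ref{ass:qumunu} permits $p \in \{1, q-1\}$ as soon as $\nu>1$.

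The only real delicacy is keeping the symmetry between the two bullets (and between the two kinds of type-(2) extra tetrahedra) straight: once the convention of which denominator corresponds to which coordinate circle is fixed, all verifications are immediate.
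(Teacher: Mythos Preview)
The paper states this as an \emph{Observation} without supplying any proof; the reader is expected to verify it directly from the description of faces (1)--(2) and Assumption~\ref{ass:qumunu}. Your argument does precisely this and is correct. In particular, your case analysis for the first bullet (type (1) is excluded outright; the first type-(2) extra tetrahedron requires $\nu>1$ by construction; the second type-(2) extra tetrahedron with $a=b=1$ forces $q=2$ and hence $\nu>1$ via Assumption~\ref{ass:qumunu}) is exactly the intended unpacking, and the symmetry for the second bullet is the one the paper uses throughout. Your example for the third bullet is valid: note that the restriction $p\in\llbracket 2,q-2\rrbracket$ from the earlier sections is \emph{not} imposed in Section~\ref{sec:general}, so $Q=\frac{1}{6}$ with $\mu=1$, $\nu=2$ is legitimate, and the pair $\{0/1,1/3\}$ indeed satisfies the hypotheses of Theorem~\ref{thm:main} while giving $b=b'=q/2$.
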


First we prove that cells of types (1)--(2)--(3), pushed forward by $\Gamma$, are combinatorially glued face-to-face (i.e.~an analogue of Theorem \ref{thm:nofaces} holds). The proof exactly shadows that of Theorem \ref{thm:nofaces} (lifting to the cover $\psi_{\mu\nu}$), except that when $\mu>1$ (resp. $\nu>1$), we must check that faces of type (2)--(3) also fit together correctly.

Assume $\nu>1$: the ``first'' tetrahedron (of type (2) in the list), corresponding to $\{A,B\}=\{\frac{0}{1},\frac{1}{1}\}$, is spanned (up to action of $\Gamma$) by the images under $\iota$ of 
$$\left (\frac{0}{q\mu},\frac{0}{q\nu}\right ), 
\left (\frac{1}{q\mu}2\pi, \frac{p}{q\nu}2\pi \right ), 
\left (\frac{1}{q\mu}2\pi, \frac{p-q}{q\nu}2\pi \right ), 
\left (\frac{2}{q\mu}2\pi, \frac{2p-q}{q\nu}2\pi\right )~. $$ 
The subfaces obtained by dropping the second or third of these four vertices also belong to faces of type (1) (with $\{A,B\}=\{\frac{0}{1},\frac{1}{2}\}$ or $\{\frac{1}{2},\frac{1}{1}\}$), by the argument of the proof of Theorem \ref{thm:nofaces}. The face obtained by dropping the last vertex is clearly a face of the $\nu$-antiprism of type (3). The face obtained by dropping the first vertex is clearly a face of that same antiprism, shifted by $(\frac{1}{q\mu},\frac{p}{q\nu})\in\Gamma$. The antiprism and its shift, finally, are glued base-to-base along $\iota\left \{(\frac{1}{q\mu}2\pi,\frac{p+kq}{q\nu}2\pi)~|~0\leq k<\nu\right \}$.

A similar argument holds when $\mu>1$ near the ``end'' of the sequence of tetrahedra: again, this just amounts to swapping $Q$ and $\infty$.

Next, we proceed to show that the candidate faces of types (1)--(2)--(3) are indeed faces of the convex hull of $\iota(\Gamma)$.

\subsection{Faces of type (3)} The vertices
$\left ( \!\! \begin{array}{c} 1 \\ 0 \\ \cos \frac{2k\pi}{\nu} \\ \sin \frac{2k\pi}{\nu} \end{array} \!\! \right )_{\!0\leq k<\nu}$ and 
$\left ( \!\! \begin{array}{c} \cos \frac{2\pi}{q\mu} \\ \sin \frac{2\pi}{q\mu} \\ \cos  \frac{2\pi (p+kq)}{q\nu} \\ \sin  \frac{2\pi (p+kq)}{q\nu} \end{array} \!\! \right )_{\!0\leq k<\nu}$ 
form two regular $\nu$-gons contained in \emph{distinct} planes parallel to $\{(0,0)\}\times \mathbb{R}^2$, and are not translates of each other (they are off by a rotation of angle $2\pi \frac{p}{q\nu} \notin \frac{2\pi}{\nu}\mathbb{Z}$): this shows that they are the vertices of a convex, non--degenerate antiprism. Moreover, these $2\nu$ vertices clearly maximize the linear form $\rho=(\cos \frac{\pi}{q\mu},\sin\frac{\pi}{q\mu},0,0)$ (that is a purely 2-dimensional statement) and therefore span a face of the convex hull of $\iota(\Gamma)$. Similarly, the vertices of the other antiprism maximize $\rho'=(0,0,\cos\frac{\pi}{q\nu},\sin\frac{\pi}{q\nu})$.

\subsection{Faces of type (1) and (2)} Let $\{A,B\}=\{\frac{\alpha}{a},\frac{\beta}{b}\}$ be as in type (1) or (2); the candidate face now is spanned by the column vectors of
$$M:=\left ( \begin{array}{cccc}
1&\cos\frac{a}{\mu q}2\pi&\cos\frac{b}{\mu q}2\pi&\cos\frac{a+b}{\mu q}2\pi \\
0&\sin\frac{a}{\mu q}2\pi&\sin\frac{b}{\mu q}2\pi&\sin\frac{a+b}{\mu q}2\pi \\
1&\cos\frac{ap-\alpha q}{\nu q}2\pi&\cos\frac{bp-\alpha q}{\nu q}2\pi&
\cos\frac{(a+b)p-(\alpha+\beta)q}{\nu q}2\pi \\
0&\sin\frac{ap-\alpha q}{\nu q}2\pi&\sin\frac{bp-\alpha q}{\nu q}2\pi&
\sin\frac{(a+b)p-(\alpha+\beta)q}{\nu q}2\pi
\end{array}\right )~.$$
We now transpose the argument of Section \ref{sec:proof}. Generally speaking, the presence of $\mu,\nu\geq 1$ makes \emph{even more true} any given inequality that we have to check, but we must check it also for the extra tetrahedra of type (2): hence some additional care.
\subsection*{Candidate faces are non-degenerate}
Rotating the first two coordinates by $\frac{-a-b}{\mu q}\pi$ and the last two by $\frac{-(a+b)p+(\alpha+\beta)q}{\nu q}\pi=\frac{-(ap-\alpha q)-(bp-\beta q)}{\nu q}\pi$, using the method of Section \ref{sec:nondeg}, and replacing $\frac{(ap-\alpha q)\pm(bp-\beta q)}{\nu q}$ with $\frac{a'\mp b'}{\nu q}\cdot\sigma(ap-\alpha q)$, compute
$$\begin{array}{rcl} \det M &=& \pm 4
\left | \begin{array}{cc} 
\cos \frac{a+b}{\mu q}\pi& \cos \frac{a-b}{\mu q}\pi \\
\cos \frac{a'-b'}{\nu q} \pi & 
\cos \frac{a'+b'}{\nu q} \pi \end{array} \right | \cdot
\left | \begin{array}{cc} 
\sin \frac{a-b}{\mu q}\pi& \sin \frac{a+b}{\mu q}\pi \\
\sin \frac{a'+b'}{\nu q} \pi & 
\sin \frac{a'-b'}{\nu q} \pi \end{array} \right |\\
&=&\pm 16( 
\cos\frac{a\pi }{\mu q} \cos\frac{b\pi }{\mu q}
\sin\frac{a'\pi}{\nu q} \sin\frac{b'\pi}{\nu q} +  
\sin\frac{a\pi}{\mu q} \sin\frac{b\pi}{\mu q} 
\cos\frac{a'\pi}{\nu q}\cos\frac{b'\pi}{\nu q}) \\ 
&& \cdot \,(
\sin\frac{a\pi}{\mu q} \cos\frac{b\pi}{\mu q}
\sin\frac{b'\pi}{\nu q} \cos\frac{a'\pi}{\nu q} + 
\sin\frac{b\pi}{\mu q} \cos\frac{a\pi}{\mu q} 
\sin\frac{a'\pi}{\nu q} \cos\frac{b'\pi}{\nu q})~.
\end{array}$$
To follow up the method of Section \ref{sec:nondeg}, we would divide both factors of $\det M$ by 
$$\textstyle{H:=\cos \frac{a\pi}{\mu q}\cos \frac{b\pi}{\mu q}\cos \frac{a'\pi}{\nu q}\cos \frac{b'\pi}{\nu q}}~:$$ however, that number can be $0$. In that case, each factor of $\det M$ has a vanishing summand. Let us prove that the other summand is then nonzero, so that $\det M\neq 0$. (Note that the \emph{sines} in $\det M$ never vanish, only the \emph{cosines} may.)

If $\cos\frac{a\pi}{\mu q}=0$, then $\mu=1$ and $a=\frac{q}{2}$. This implies $\nu>1$ by Assumption \ref{ass:qumunu}, so the first factor of $\det M$ has a nonzero second summand. Moreover, the second factor of $\det M$ has a nonzero first summand unless $\cos \frac{b\pi}{\mu q}=0$ i.e.~$b=\frac{q}{2}=a$. But $a,b$ are coprime, so we then have $a=b=1$ and $q=2$ and $\frac{p}{q}=\frac{1}{2}$, which is ruled out when $\mu=1$ (Assumption \ref{ass:qumunu}). If another factor of $H$ vanishes, the argument is similar up to switching $(a,a')$ with $(b,b')$, and/or $(a,b,\mu)$ with $(a',b',\nu)$. In any case, $M$ is invertible. On the other hand, if $H\neq 0$, we must make sure that
\begin{equation} \label{eq:munutan}
\textstyle{\tan\frac{a'\pi}{\nu q} \tan\frac{b'\pi}{\nu q} \neq 
- \tan\frac{a\pi}{\mu q} \tan\frac{b\pi}{\mu q}} ~\text{ ; }~
\textstyle{\tan\frac{a\pi}{\mu q} \tan\frac{b'\pi}{\nu q} \neq
-\tan\frac{b\pi}{\mu q} \tan\frac{a'\pi}{\nu q}}~.
\end{equation}
If $\mu>1$ and $\nu>1$, all tangents in (\ref{eq:munutan}) are positive, so (\ref{eq:munutan}) holds.

Suppose $\mu=1<\nu$. Then at most one of $a',b'$ is equal to $1$ (Observation \ref{obs:whoisone}). If $a,b<\frac{q}{2}$, the members in (\ref{eq:munutan}) have opposite signs. If $a>\frac{q}{2}$, since $a'b+b'a=q$, we have $b'=1$ which implies $a'>1$ and $a=q-a'b$. Thus, (\ref{eq:munutan}) becomes
$$\textstyle{\tan\frac{a'\pi}{\nu q} \tan\frac{\pi}{\nu q} \neq 
\tan\frac{a'b\pi}{q} \tan\frac{b\pi}{q}~\text{ ; }~\left . \tan\frac{a'b\pi}{q} \right / \tan\frac{b\pi}{q} \neq \left . \tan\frac{a'\pi}{\nu q} \right / \tan\frac{\pi}{\nu q}~:}$$
in the first inequality, even if $b=1$, the right member is larger because $\nu>1$. In the second inequality, even if $b=1$, the method of Section \ref{sec:nondeg} shows that the left member is larger because $\nu>1$ and $a'>1$. 

If $b>\frac{q}{2}$, the argument is the same, exchanging $(a,a')$ with $(b,b')$. Finally, if $\nu=1<\mu$, the argument is again the same, switching $(a,b,\mu)$ with $(a',b',\nu)$. Therefore, the matrix $M$ is invertible and the candidate face is non-degenerate.

\subsection*{Candidate faces are faces of the convex hull}
Let us now prove that if a linear form $\rho=(U,U',V,V')$ takes the same value $Z>0$ on each column vector of $M$, then $\rho\circ\iota$ achieves its maximum on $\mathbb{T}$ at $c$ and $|V''-U''|<Z<V''+U''$, where $U''=\sqrt{U^2+U'^2}$ and $V''=\sqrt{V^2+V'^2}$ (by the argument after Claim \ref{cla:key}, this will show that the candidate face is a face of the convex hull). An elementary computation shows that
\begin{equation*} \left \{ \begin{array}{rcl}
\rho&=& \left (\begin{array}{r} 
-\cos \frac{a+b}{\mu q} \pi 
\sin \frac{ap-\alpha q}{\nu q}\pi \sin \frac{bp-\beta q}{\nu q} \pi \\
-\sin \frac{a+b}{\mu q} \pi 
\sin \frac{ap-\alpha q}{\nu q}\pi \sin \frac{bp-\beta q}{\nu q}\pi \\
\cos \frac{(ap-\alpha q)+(bp-\beta q)}{\nu q}\pi 
\sin \frac{a}{\mu q}\pi \sin \frac{b}{\mu q}\pi \\
\sin \frac{(ap-\alpha q)+(bp-\beta q)}{\nu q}\pi 
\sin \frac{a}{\mu q}\pi \sin \frac{b}{\mu q}\pi
\end{array}  \right )^t 
=:\left (\begin{array}{l} U\\ U'\\ V\\ V'\end{array}
\right )^t\\ &&\\ Z&=&
\cos \frac{(ap-\alpha q)+(bp-\beta q)}{\nu q}\pi 
\sin \frac{a \pi}{\mu q} \sin \frac{b \pi}{\mu q} 
-\cos \frac{a+b}{\mu q} \pi 
\sin \frac{ap-\alpha q}{\nu q}\pi 
\sin \frac{bp -\beta q}{\nu q}\pi \\ &=& 
\frac{1}{2} \left ( \cos \frac{x'\pi}{\nu q} \cos \frac{y\pi}{\mu q} 
-\cos \frac{x\pi}{\mu q} \cos \frac{y'\pi}{\nu q} \right ) 
\end{array} \right . \end{equation*}
will do (the second expression of $Z$ follows from the first one and from the fact that $(ap-\alpha q)(bp-\beta q)<0$ --- again, the sign of $Z$ remains to be checked). First, 
$$\begin{array}{rcl}\rho\circ\iota(c)&=&-\sin\frac{ap-\alpha q}{\nu q}\pi\sin\frac{bp-\beta q}{\nu q}\pi+\sin\frac{a}{\mu q}\pi\sin\frac{b}{\mu q}\pi \\ 
&=&\sin\frac{a'}{\nu q}\pi\sin\frac{b'}{\nu q}\pi+\sin\frac{a}{\mu q}\pi\sin\frac{b}{\mu q}\pi=U''+V''\end{array}$$
(again because $(ap-\alpha q)(bp-\beta q)<0$), so $\underset{\mathbb{T}}{\max}(\rho\circ\iota)=\rho\circ\iota(c)$. 

The upper bound $U''+V''$ for $Z$ is clear from its first expression; the lower bound follows lines similar to the proof of Claim \ref{cla:key}: we just need to check
$$\textstyle{2Z=\cos \frac{x'\pi}{\nu q} \cos\frac{y\pi}{\mu q} - 
\cos \frac{x\pi}{\mu q} \cos\frac{y'\pi}{\nu q}>2 \left | 
\sin \frac{a\pi}{\mu q} \sin \frac{b\pi}{\mu q}- 
\sin \frac{a'\pi}{\nu q} \sin \frac{b'\pi}{\nu q} \right |~.}$$
The right member being $|(\cos\frac{x'}{\nu q}\pi-\cos\frac{y'}{\nu q}\pi
)-(\cos\frac{y}{\mu q}\pi-\cos\frac{x}{\mu q}\pi)|$, we only need
$$\textstyle{
(\cos \frac{x'}{\nu q}\pi\pm 1 )\cdot 
(\cos \frac{y}{\mu q}\pi \mp 1 )~>~
(\cos \frac{x}{\mu q}\pi \mp 1 )\cdot 
(\cos \frac{y'}{\nu q}\pi\pm 1 )}$$
which amounts to
\begin{equation}\label{eq:sineratio2} {
\frac{\sin \frac{x'}{\nu q} \cdot \frac{\pi}{2}}
{\sin \frac{y'}{\nu q}\cdot\frac{\pi}{2}} <
\frac{\sin \frac{\mu q - x}{\mu q}\cdot \frac{\pi}{2}}
{\sin \frac{\mu q - y}{\mu q}\cdot \frac{\pi}{2}} 
\hspace{6pt}(i)\hspace{8pt}\text{ and }~ 
\frac{\sin \frac{y}{\mu q} \cdot \frac{\pi}{2}}
{\sin \frac{x}{\mu q} \cdot\frac{\pi}{2}} <  
\frac{\sin \frac{\nu q-y'}{\nu q}\cdot\frac{\pi}{2}}
{\sin \frac{\nu q-x'}{\nu q}\cdot\frac{\pi}{2}}
\hspace{6pt}(ii)~.} \end{equation}
Let us focus on (\ref{eq:sineratio2})-$(i)$. By Proposition \ref{prop:sines},
it is enough to check
$0<x'<y'< \nu q$ and $0<y<x<\mu q$ (which are clear from Proposition \ref{prop:abab}: indeed, by Observation \ref{obs:whoisone}, we \emph{may} have $y'=q$ but then $\nu>1$; we \emph{may} have $x=q$ but then $\mu>1$), plus
\begin{equation}\label{eq:enfin2} 
\textstyle
{
\frac{x'}{\nu} < q-\frac{x}{\mu} ~\text{ and }~ 
\frac{x'}{y'}\leq \frac{\mu q-x}{\mu q-y}~.}\end{equation}
The first inequality of (\ref{eq:enfin2}) can be written
$\frac{|a'-b'|}{\nu}+\frac{a+b}{\mu}<a'b+b'a$, or equivalently,
$$\textstyle{(a'-\frac{1}{\mu}) \cdot (b\pm \frac{1}{\nu})+
(b'-\frac{1}{\mu})\cdot (a\mp \frac{1}{\nu})>0~.}$$
If $\mu,\nu>1$ this is obvious. If $\mu=1<\nu$, then at least one of $a',b'$ is larger than $1$ (Observation \ref{obs:whoisone}), and the product where it appears is positive: done. If $\nu=1<\mu$, then at least one of $a,b$ is larger than one and we are also done.

The second inequality of (\ref{eq:enfin2}) can be written $\mu (y'-x')\geq \frac{(a+b)(a'+b')-|(a-b)(a'-b')|}{a'b+b'a}$. As in the proof of Claim \ref{cla:key}, the left member is $2\mu\inf\{a',b'\}\geq 2$ while the right member is at most $2$. The proof of (\ref{eq:sineratio2})-$(ii)$ is identical to that of (\ref{eq:sineratio2})-$(i)$, swapping $(a,b,x,y)$ with $(a',b',y',x')$.

\begin{flushright}
Laboratoire Paul Painlev\'e (UMR 8524) \\ 
CNRS -- Universit\'e de Lille I  \\
59 655 Villeneuve d'Ascq C\'edex, France \\
\url{Francois.Gueritaud@math.univ-lille1.fr}
\end{flushright}


\begin{thebibliography}{MaR}

\bibitem[A1]{anisov1} Sergei Anisov, \emph{Geometrical spines of lens manifolds}, Journal of the Lond. Math. Soc. {\bf 74}, Issue 03 (2006), 799--816.

\bibitem[A2]{anisov2} Sergei Anisov, \emph{Cut loci in lens manifolds}, C. R. Math. Acad. Sci. Paris {\bf 342}, No. 8 (2006), 595--600.

\bibitem[ASWY]{aswy} Hirotaka Akiyoshi, Makoto Sakuma, Masaaki Wada,Yasushi Yamashita, \emph{Punctured torus groups and 2-bridge knot groups I}, Lec. Notes in Math. 1909, Springer (2007).

\bibitem[G1]{these} Fran\c{c}ois Gu\'eritaud, \emph{G\'eom\'etrie hyperbolique effective et triangulations id\'eales canoniques en dimension trois}, PhD thesis, 154 pages, Orsay (2006).

\bibitem[G2]{qf} Fran\c{c}ois Gu\'eritaud, \emph{Triangulated cores of punctured-torus groups}, J. Diff. Geom. {\bf 81} (2009), 91--142.


\bibitem[GS]{ananas} Fran\c{c}ois Gu\'eritaud and Saul Schleimer, \emph{Canonical triangulations of Dehn fillings}, arXiv:math.GT/0801359, 37 pages, submitted.

\bibitem[JR]{jaco} Bus Jaco, Hyam Rubinstein, \emph{Layered triangulations of 3--manifolds}, preprint (2006), 96 pages, available at \url{http://www.math.okstate.edu/~jaco/}~.

\bibitem[La]{lackenby} Marc Lackenby, \emph{The canonical decomposition of once--punctured torus bundles}, Comment. Math. Helv. {\bf 78} (2003), 363--384.

\bibitem[S1]{smilansky1} Zeev Smilansky, \emph{Convex hulls of generalized moment curves}, Israel J. Math. {\bf 52} (1985), 115--128.

\bibitem[S2]{smilansky2} Zeev Smilansky, \emph{Bi-cyclic 4-polytopes}, Israel J. Math. {\bf 70} (1990), 82--92.

\bibitem[Vi]{farey} I. Vinogradov, \emph{An Introduction to the Theory of Numbers}, Pergamon Press, London -- New York, 1955, pp. vi+155.

\bibitem[We]{snappea} Jeffrey Weeks, \emph{SnapPea}, a software for the study of hyperbolic manifolds, \url{http://www.geometrygames.org/SnapPea/}~. 

\end{thebibliography}
\end{document}